\author{P. Del Moral \footnote{Centre INRIA Bordeaux Sud-Ouest
\& Institut de Math\'ematiques de Bordeaux
Universit\'e Bordeaux I
351, cours de la Lib\'eration
33405 Talence cedex, France}, F. Patras\footnote{CNRS UMR 6621, 
Universit\'e de Nice, Laboratoire de Math\'ematiques J.-A. Dieudonn\'e,  
Parc Valrose, 
06108 Nice Cedex 2, 
France}, S. Rubenthaler\footnote{CNRS UMR 6621, 
Universit\'e de Nice, Laboratoire de Math\'ematiques J.-A. Dieudonn\'e,  
Parc Valrose, 
06108 Nice Cedex 2, 
France - Tel. : (+33)04.93.16.87.90}}
\newtheorem{theorem}{Theorem}[section]
\newtheorem{lemma}[theorem]{Lemma}
\newtheorem{definition}[theorem]{Definition}
\newtheorem{corollary}[theorem]{Corollary}
\numberwithin{equation}{section}
\newcommand{\N}{\mathbb{N}}
\newcommand{\Q}{\mathbb{Q}}
\newcommand{\E}{\mathbb{E}}
\newcommand{\p}{\mathbb{P}}
\newcommand{\R}{\mathbb{R}}
\renewcommand{\epsilon}{ \varepsilon}
\newcommand{\Bsym}{\mathcal{B}^{\text{sym}}_0}
\title{Convergence of $U$-statistics for interacting particle systems.}
\begin{document}

 \maketitle     
 
\begin{abstract}
The convergence of $U$-statistics has been intensively studied for estimators based on families of i.i.d. random variables and variants of them. In most cases, the independence assumption is crucial \cite{lee-1990,delapena-gine-1999}. When dealing with Feynman-Kac and other interacting particle systems of Monte Carlo type, one faces a new type of problem. Namely, in a sample of $N$ particles obtained through the corresponding algorithms, the distributions of the particles are correlated -although any finite number of them is asymptotically independent with respect to the total number $N$ of particles. In the present article, exploiting the fine asymptotics of particle systems, we prove convergence theorems for $U$-statistics in this framework.

{\bf Keywords :} interacting particle systems, Feynman-Kac models, U-statistics, fluctuations, limit theorems. 
\end{abstract}

\section*{Introduction}

The convergence of $U$-statistics has been intensively studied for estimators based on families of i.i.d. random variables and variants of them. In most cases, the independence assumption is crucial. When dealing with Feynman-Kac and other interacting particle systems of Monte-Carlo type, one faces a new type of problem. Namely, in a sample of $N$ particles obtained through the corresponding algorithms, the distributions of the particles are correlated -although any finite number of them is asymptotically independent with respect to the total number $N$ of particles. It happens so (and this is the main contribution of the present article to show) that this asymptotic independence is enough in practice to insure the convergence of $U$-statistics based on interacting particle systems. In the following, we prove therefore the convergence of $U$-statistics for different particle systems under mild assumption that are satisfied by Feynmann-Kac particle systems. The case of Bird and Nanbu systems also fits in this framework and will be treated elsewhere by the third Author \cite{rubenthaler-2009}. To study the asymptotics of Feynman-Kac systems, whose properties are crucial to ensure the convergence of the statistics, we will use a functional representation, as introduced in \cite{DPR-2006} in the framework of discrete interacting particle systems.

The article is organized as follows. To fix the notations and the general framework of interacting particle systems, we first recall the (Feynman-Kac, continuous) interacting particle model. The model first appeared in quantum physics, in the work of Feynman and Kac in the 1940-50's, as a way to encode the motion of a quantum particle evolving in a potential (e.g. the interaction potential of a quantum field theory, viewed as a perturbation of the free Hamiltonian) in terms of path-integral formulas. It was realized progressively that interacting particle systems could be used in incredibly many different settings in probability and statistics.
A detailed list of the (still expanding) application areas of these models is contained in \cite{del-moral-2004}, to which we refer for further informations. 
Recall simply, since we focus here on its statistical features, that the model is mainly used, in applied statistics, as a Bayesian nonlinear filtering model: the motion of the particles is driven by a diffusion process and the potential encodes the likewood of the states with respect to observations or to some reference path. 

We study then the associated empirical joint distributions of a finite number $k$ of particles and study the convergence of the distributions in terms of the total number $N$ of particles of the system. This is closely related to our previous joint work \cite{DPR-2006} on discrete Feynman-Kac models -although the continuous hypothesis we use in the present article leads to some simplification of the tricky combinatorics that showed up in the discrete framework. 

We turn then to $U$-statistics for interacting particle systems and prove that under mild asumptions on the behavior of the system (satisfied e.g. by Feynman-Kac and Boltzmann systems) several asymptotic normality properties holds.

\section{Feynman-Kac particle systems}\label{Sec:1}
Let us consider a $E$-valued Markov process $X_t$, where $E=\R^d$ (or an arbitrary metric space) with a time-inhomogeneous infinitesimal generator $L_t$, continuous trajectories, and a positive bounded potential function $V_t$, $0\leq V_t(x)\leq V_\infty$. We assume that the distribution of $X_0$ is $\gamma_0= \eta_0$. Notice that these hypothesis are meaningful for most applications, but could be accomodated to more general ones, see \cite{del-moral-2004}. We are interested in the unnormalized (resp. normalized) distribution flows $\gamma_t$ and $\eta_t$ that are solutions, for sufficiently regular test functions $f$ and under appropriate regularity conditions, of the nonlinear equations:
$$\frac{d}{dt}\gamma_t(f)=\gamma_t(L_t(f))-\gamma_t(fV_t)$$
and
$$\frac{d}{dt}\eta_t(f)=\eta_t(L_t(f))+\eta_t(f(\eta_t(V_t)-V_t)).$$
In terms of $X_t$, we have:
$$
\gamma_t(f) = \E\left( f(X_t) \exp\left(-\int_0^t V_s(X_s) ds\right)\right),\ \eta_t(f)=\frac{\gamma_t(f)}{\gamma_t(1)}\ . 
$$
\subsection{Definitions and notations}

Let us fix first of all some notations.
For $q\in \N^*$, we write $[q]:=\{1,\dots,q\}$. For $q,N \in \N^*$, we set $\langle q,N \rangle := \{s\in [N]^{[q]}, s\ \text{injective}\}$.
For $q$ even, we write $\mathcal{I}_q$ for the set of  partitions of $[q]$ in pairs. We have $$\# \mathcal{I}_q = \frac{q!}{2^{q/2}\left( \frac{q}{2} \right)!}\ .$$
The set of smooth bounded  (resp. smooth bounded symmetric) functions on $E^q$ is written $
\mathcal{B}_b ( E^q)$ (resp. $
\mathcal{B}_b^{\text{sym}}( E^q)$). We also write $\mathcal{B}^{\text{sym}}_0 ( E^q)$ for the set of symmetric functions:
$$\mathcal{B}^{\text{sym}}_0 ( E^q)  := \left\{ F \in \mathcal{B}^{\text{sym}}_b ( E^q) : \int_{E} F(x_1, \dots, x_q) \gamma_t(dx_q) = 0\right\}.$$
Notice that the set of functions $\mathcal{B}^{\text{sym}}_0 ( E^q)$ depends on $t$, so that a better notation would be $\mathcal{B}^{\text{sym}}_{0,t} ( E^q)$. However, since in practice the abbreviated notation should not lead to confusion, we decided not emphasize this dependency for notational simplicity. We write simply $\mathcal{B}_0 ( E)$ for the centered functions: $F \in \mathcal{B}_b ( E) : \int_{E} F(x) \gamma_t(dx) = 0$

The empirical (possibly random) measure associated to a (possibly random) vector $x=(x_1,\dots,x_N )\in E^N$ is given by

$$
m(x)=\frac{1}{N} \sum_{i=1}^N \delta_{x_i}\ .
$$
We have, for all $q\in \N^*$, $F:E^q \rightarrow \R$:
$$
m(x)^{\otimes q}(F) = \frac{1}{N^q} \sum_{s\in [N]^{[q]}} F(x_{s(1)},\dots,x_{s(q)})\ .
$$
We also consider the corresponding $U$-statistics:
$$
m(x)^{\odot q}(F) = \frac{1}{(N)_q} \sum_{s \in \langle q , N\rangle } F(x_{s(1)},\dots,x_{s(q)})
$$
$$={{N}\choose{q}}^{-1}\sum\limits_{1\leq i_1<...<i_q\leq N}F_{sym}(x_{i_1},\dots,x_{i_q})\ ,
$$
where 
$
(N)_q=\frac{N!}{(N-q)!}=\# \langle q, N\rangle\ 
$ and
$
(F)_{\text{sym}}(y_1,\dots,y_q) := \frac{1}{q!} \sum_{\sigma \in S_q} F(y_{\sigma(1)},\dots,y_{\sigma(q)})\ ,
$
with $S_q$ the symmetric group of order $q$.
  
The notion of differential for sequences of signed measures\footnote{From now on, \it measure \rm will have the more general meaning of \it signed measure\rm} will also be useful.
The total variation norm is written $||~~||_{tv}$, so that, for any linear operator $L$ on $\mathcal{B}_{b}(E^q)$,
$$||L||_{tv}:=\sup_{f\in\mathcal{B}_b(E^q):\Vert f\Vert\leq 1}~|L(f)|$$
Let $(\Theta^N)_{N\geq 1}$,  be a uniformly bounded sequence of measures on $E^q$, in the sense that $\sup_{N\geq 1}{\|\Theta^N\|_{\rm{ } tv}}<\infty$. 
The sequence
$\Theta^N$ is said to converge strongly to some measure $\Theta$, 
as $N\uparrow\infty$ if and only if
 $$
\forall f\in \mathcal{B}_b(E)\qquad
\lim_{N\uparrow\infty} \Theta^N(f)= \Theta(f)
$$ 
\begin{definition} Let us assume that $\Theta^N$ converges strongly to $\Theta$.
The discrete derivative of the sequence $(\Theta^N)_{N\geq 1}$ is the sequence of
measures $ (\partial \Theta^N)_{N\geq 1}$ defined by
 $$
 \partial \Theta^N:=N~\left[ \Theta^N-\Theta\right]
 $$
We say that $\Theta^N$ is differentiable, if $ \partial \Theta^N$
is uniformly bounded, and if it 
 converges strongly to some measure $\partial \Theta$, called simply the derivative of $\Theta^N$,
as $N\uparrow\infty$.
\end{definition}

The discrete derivative $\partial\Theta^N$ of a differentiable 
sequence  can itself be differentiable.  In this situation, the derivative of the discrete derivative is called the second derivative of $\Theta^N$ and it is denoted by $ \partial^2 \Theta=\partial  \left(\partial\Theta \right)$, and so on. 

A sequence $\Theta^N$ that is
 differentiable up to order $(k+1)$, has the following  representation
$$
\Theta^N=\sum_{0\leq l\leq k }\frac{1}{N^l}~\partial^l\Theta+\frac{1}{N^{k+1}}~\partial ^{k+1}\Theta^N
$$
with $\sup_{N\geq 1}{\|\partial ^{k+1}\Theta^N\|_{\rm{ } tv}}<\infty$
, and the convention $\partial^0\Theta=\Theta$, for $l=0$.
  
\subsection{A genetic particle model}

A particle system approaching the measures $\eta_t$ (and therefore also $\gamma_t$) is the following. At $t=0$, the random vector $\Xi_t=(\xi^1_t,\dots,\xi^N_t)$ is a family of i.i.d. random variables distributed according to $\gamma_0=\eta_0$. Each entry $\xi^i_t$ of the vector diffuses according to the generator $L_t$, independently of the other entries.  Each entry $\xi_t^i$ has an exponential clock of parameter $V_\infty$ (independent of all the other variables defined up to now). When the clock of $\xi^i_t$ rings, say at $\tau$, it can 
\begin{itemize}
\item  jump to a randomly (and uniformly) chosen particle in the family, including itself, with a probability $\frac{V_\tau(\xi_t^i)}{V_\infty}$
\item stay where it is with probability $1- \frac{V_\tau(\xi_t^i)}{V_\infty}$.
\end{itemize}
Up to a renormalization of the jump type generators (which is convenient for our present purposes), this is the model described in \cite[Sect. 1.5.2]{del-moral-2004}.
Notice that the law of $\xi_t^i$ depends on $N$, so that a more consistent notation for $\xi_t^i$ would be $\xi_t^{N,i}$ -whenever we want to emphasize the dependency on $N$, we will switch to this second notation.
The corresponding empirical\footnote{The adjective \it empirical \rm refers, in the present article, to any measure, process or statistics obtained from a particle system approximation.} measures and empirical U-statistics are given (and related) by:
$$\eta_t^N =m(\Xi_t)=m(\xi^1_t,\dots, \xi^N_t),$$
$$\gamma_t^N =\gamma_t^N(1)\cdot\eta_t^N, \text{with} \ \gamma_t^N(1)=\exp\left(  -\int_0^t \eta_s^N(V_s) ds \right)\ ,$$ 
$$(\gamma_t^N)^{\odot q}(F)=\gamma^N_t(1)^q\cdot(\eta_t^N)^{\odot q}(F).$$
These are the analogs in continuous time to the random measures and U-statistics defined and studied in \cite{DPR-2006} in a discrete time setting. 
We recall that $\eta_t^N$ and $\gamma_t^N$ are known to converge as $N\to \infty$ to $\eta_t$ and $\gamma_t$ \cite{del-moral-2004}.

In oder to study the convergence of the (empirical) U-statistics $(\gamma_t^N)^{\odot q}$ and $(\eta_t^N)^{\odot q}$, we are going to rewrite them by means of functional expansions (actually, Laurent series in the parameter $N$). These expansions will allow us, later, to control the convergence and (among others) to extend to particle systems the classical central limits theorems for U-statistics.
Deriving these expansions is the main purpose of the first part of the article, together with first results of convergence.

For these purposes, it is useful to introduce an auxiliary $q$-particle system. The reasons for its introduction will become clear later and stem from a backward analysis of Feynman-Kac trajectories of families of $q$ particles.

\begin{definition}\label{Def:aux}
The \it auxiliary system \rm of  $q$ particles, $\hat{\xi}^1_t,\dots,\hat{\xi}^q_t$ is defined as follows:
the random variables $\hat{\xi}^1_0,\dots,\hat{\xi}^q_0$ are independant of law $\eta_0$. Moreover, the particles
\begin{equation}
 \label{Def:hatX1}
\hat{\xi}^1_t,\dots,\hat{\xi}^q_t  \text{ diffuse according to }  L_t
\end{equation}
(one more time, independently of one another)
and undergo the following jumps. For any $(i,j)\in [q]^2,\ i\not= j$, there is an exponential clock of parameter  $V_\infty/N$ and a corresponding Poisson point process $T^{(i,j)}_1,T^{(i,j)}_2,\dots$ of parameter $V_\infty/N$. The $T^{(i,j)}_1,T^{(i,j)}_2,\dots$ are named the ringing times.
 At a ringing  time $t\in \{ T^{(i,j)}_1,T^{(i,j)}_2,\dots\}$,
\begin{equation}
\label{Def:hatX2}
\hat{\xi}^i_t 
\begin{cases}
 \leftarrow \hat{\xi}^j_t & \text{ with  proba. } \frac{V_t(\hat{\xi}^i_t)}{V_\infty}\\
\leftarrow \hat{\xi}^j_t & \text{ with  proba. } 1-\frac{V_t(\hat{\xi}^i_t)}{V_\infty}\ .
\end{cases}
\end{equation}
\end{definition}

The notation $\hat{\xi}^i_t \leftarrow \hat{\xi}^j_t$ means that $\hat{\xi}^i_t$ jumps to (or is substituted by) $\hat{\xi}^j_t$. When $q=N$, the particle systems $(\hat{\xi}^1_t,\dots,\hat{\xi}^N_t)_{t\geq 0}$ has the same law as  $({\xi}^1_t,\dots,{\xi}^N_t)_{t\geq 0}$.

\

We set, for an arbitrary $F\in \mathcal{B}_b ( E^q)$,
$$
F^e((\hat{\xi}^1_s,\dots,\hat{\xi}^q_s)_{0\leq s \leq t}) =  F(\hat{\xi}^1_t,\dots,\hat{\xi}^q_t) \exp\left( {-\int_0^t [V_s(\hat{\xi}^1_s) +\dots + V_s(\hat{\xi}^q_s)] ds }\right)\ .
$$
and
$$
E_{t,k}(F) := \E\left\{ F^e((\hat{\xi}^1_s,\dots,\hat{\xi}^q_s)_{0\leq s \leq t})| k\ \text{rings on} [0;t] \right\}
\ .$$
Notice, for further use, that $E_{t,k}(F)$ does not depend on $N$. This is because, as a consequence of the general properties of Poisson point processes, conditionally to the hypothesis that there are $k$ rings on $[0,t]$, the distribution of the ringing times is uniform on $[0,t]$ and therefore independent of the parameter  $V_\infty/N$; since the other parameters of the process ($L_t$ and the jump probabilities $\frac{V_t(\hat{\xi}^i_t)}{V_\infty}$) are independent of $N$, the property follows.

\subsection{Expansion of the unnormalized measure}\label{unnorm}

Let us write for an arbitrary $F\in \mathcal{B}_b ( E^q)$, 
$$\mathbb{Q}^N_{t,q}(F):=\E ((\gamma_t^N)^{\odot q}(F)).$$ 
We refer to $\mathbb{Q}^N_{t,q}$ as to the empirical unnormalized measure associated to the particle system $\xi_t^i$. Since the joint distribution of the sequence $\xi_t^1,\dots,\xi_t^N$ is invariant by permutations, $\mathbb{Q}^N_{t,q}(F)=\mathbb{Q}^N_{t,q}(F_{sym})$, and we can assume without restriction that $F$ is a symmetric function.
The first question we adress is the (exact) computation of the speed of convergence of the empirical unnormalized measure $\mathbb{Q}^N_{t,q}$ to $\gamma_t^{\otimes q}$.

\begin{theorem}
\label{Theo:FKunnormalized}
For $F\in \mathcal{B}_b^{sym} ( E^q)$, we have the Laurent expansion
\begin{eqnarray}
\label{Eq:FKunnormalized1}
 \mathbb{Q}^N_{t,q}(F) &=& 
 \sum_{k=0}^\infty \frac{(\lambda t)^k e^{-\lambda t}}{k!} E_{t,k}(F)\\
\label{Eq:FKunnormalized2}
&=& \gamma_t^{\otimes q}(F) +\underset{k+i\geq 1}{\sum_{k,i\geq 0}^\infty}  (-1)^i\frac{(\lambda t)^{k+i}}{k!i!} E_{t,k}(F)\\
&=& \gamma_t^{\otimes q}(F) +\sum_{r=1}^\infty \frac{1}{N^r}\sum_{k=0}^r\left( \frac{(-1)^{r-k}}{k!(r-k)!} \left( q(q-1) V_\infty t  \right)^r E_{t,k}(F)\right)
\end{eqnarray}
where $\lambda:={q(q-1)}\frac{V_\infty}{N}$.  
In particular,  $\mathbb{Q}^N_{t,q}$ is differentiable up to any order with 
$$
\partial^r \mathbb{Q}_{t,q}(F) = \sum_{k=0}^r \frac{(-1)^{r-k}}{k!(r-k)!} \left( {q(q-1)} V_\infty t  \right)^r E_{t,k}(F)\ .
$$
\end{theorem}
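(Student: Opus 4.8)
The strategy is to compute $\mathbb{Q}^N_{t,q}(F) = \mathbb{E}((\gamma_t^N)^{\odot q}(F))$ by conditioning on the number of ringing times of the auxiliary $q$-particle system of Definition~\ref{Def:aux}, and then to expand the resulting Poisson weights as Laurent series in $N$. The first step is to identify $\mathbb{Q}^N_{t,q}(F)$ with an expectation over the auxiliary system: the point of introducing $\hat\xi^1_t,\dots,\hat\xi^q_t$ is precisely that a backward analysis of the Feynman--Kac trajectories of a generic $q$-tuple of particles in the genetic model $\Xi_t$ shows that $\mathbb{Q}^N_{t,q}(F) = \mathbb{E}\left( F^e((\hat\xi^1_s,\dots,\hat\xi^q_s)_{0\le s\le t}) \right)$, where the exponential weight accounts for the factor $\gamma_t^N(1)^q = \exp(-\int_0^t\sum_i V_s(\cdot)ds)$ appearing in the definition of $(\gamma_t^N)^{\odot q}$ together with the resampling mechanism. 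One checks that the $q(q-1)$ ordered pairs $(i,j)$ each carry an independent Poisson clock of rate $V_\infty/N$, so the total number of rings on $[0,t]$ is Poisson with parameter $\lambda t$ where $\lambda = q(q-1)V_\infty/N$; conditioning on that number being $k$ and using the definition of $E_{t,k}(F)$ gives \eqref{Eq:FKunnormalized1}.

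The second step is purely combinatorial: expand $e^{-\lambda t} = \sum_{i\ge 0}(-1)^i(\lambda t)^i/i!$ in \eqref{Eq:FKunnormalized1}, so that
$$\mathbb{Q}^N_{t,q}(F) = \sum_{k,i\ge 0} (-1)^i \frac{(\lambda t)^{k+i}}{k!\,i!}\, E_{t,k}(F).$$
The $(k,i)=(0,0)$ term is $E_{t,0}(F)$, which is the conditional expectation given no rings, i.e. the $q$ particles diffuse independently with the pure Feynman--Kac weight, hence equals $\gamma_t^{\otimes q}(F)$ (here $\gamma_t(f) = \mathbb{E}(f(X_t)\exp(-\int_0^t V_s(X_s)ds))$ and independence over the $q$ coordinates gives the tensor power). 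Pulling out that term yields \eqref{Eq:FKunnormalized2}. Then substitute $\lambda t = q(q-1)V_\infty t/N$ and regroup the double sum by the power $r := k+i$ of $1/N$: for fixed $r$, $i$ ranges so that $k=r-i$ runs from $0$ to $r$, giving the coefficient $\sum_{k=0}^r \frac{(-1)^{r-k}}{k!(r-k)!}(q(q-1)V_\infty t)^r E_{t,k}(F)$, which is \eqref{Eq:FKunnormalized2}-to-\eqref{Eq:FKunnormalized1} rewritten as the third displayed line.

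The final step is to read off differentiability. Since $E_{t,k}(F)$ does not depend on $N$ (as already noted in the excerpt, conditionally on $k$ rings the ringing times are uniform on $[0,t]$, independent of $V_\infty/N$) and $|E_{t,k}(F)| \le \|F\|_\infty$, the series in $r$ converges absolutely and the remainder after order $k$ is $O(1/N^{k+1})$ uniformly, so the Laurent expansion exactly matches the representation $\Theta^N = \sum_{l\le k} N^{-l}\partial^l\Theta + N^{-(k+1)}\partial^{k+1}\Theta^N$ from the Definition; matching coefficients identifies $\partial^r\mathbb{Q}_{t,q}(F)$ with the stated sum. I expect the main obstacle to be the first step — rigorously justifying that $\mathbb{Q}^N_{t,q}(F)$ equals the auxiliary-system expectation of $F^e$, i.e. the backward/duality argument relating the forward genetic selection-mutation dynamics of the $N$-particle system to the backward coalescent-type dynamics on $q$ labelled particles, including the careful bookkeeping of which pairs of the $q$ sampled indices can interact and why the effective rate per ordered pair is exactly $V_\infty/N$; the expansion and the differentiability bookkeeping are then routine.
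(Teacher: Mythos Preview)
Your outline is correct and matches the paper's approach in all of the combinatorial steps: conditioning on the Poisson number of rings to get \eqref{Eq:FKunnormalized1}, expanding $e^{-\lambda t}$, identifying $E_{t,0}(F)=\gamma_t^{\otimes q}(F)$, regrouping by $r=k+i$, and reading off the derivatives from the $N$-independence and uniform boundedness of $E_{t,k}(F)$.

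The one place where your plan is vague is exactly the one you flag, and there the paper's method is slightly different in spirit from what you describe. Rather than a backward/duality argument on trajectories, the paper carries out a direct \emph{forward} generator computation: it differentiates $t\mapsto \mathbb{Q}^N_{t,q}(F)=\E((\gamma_t^N)^{\odot q}(F))$ in $t$, separating the contributions from the diffusion $L_t$, from the exponential weight $\gamma_t^N(1)^q$ (which produces $-q\,\eta_t^N(V_t)$), and from the selection jumps of each of the $q$ sampled particles onto any of the $N$ particles. After expanding the sums over $s\in\langle q,N\rangle$ and $k\in[N]$ and using the symmetry of $F$, many terms cancel and one is left with
\[
\frac{d}{dt}\mathbb{Q}^N_{t,q}(F)=\mathbb{Q}^N_{t,q}(L_tF)-\mathbb{Q}^N_{t,q}(F_{V_t})+\frac{1}{N}\sum_{1\le i,r\le q}\mathbb{Q}^N_{t,q}(F_{V_t}^{(i,r)}),
\]
where $F_{V_t}(x)=(\sum_j V_t(x_j))F(x)$ and $F_{V_t}^{(i,r)}(x)=V_t(x_i)[F(\dots,x_r,\dots)-F(x)]$. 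This is exactly the Kolmogorov equation for $\E(F^e((\hat\xi_s)_{s\le t}))$ under the auxiliary dynamics of Definition~\ref{Def:aux}, and since the initial data agree, the identification follows. The ``backward'' intuition you mention is what motivates the form of the auxiliary system, but the rigorous link is this forward ODE matching; that is the calculation you should fill in.
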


Notice in particular that, although the $E_{t,k}(F)$s depend on the choice of the upper bound $V_\infty$ for the potential function $V$, the coefficients of the development do not (they are the derivatives of $\mathbb{Q}_{t,q}$, that do not depend on $V_\infty$).

\proof[Proof of Theorem \ref{Theo:FKunnormalized}]

\

We have first, for $f\in \mathcal{B}_b ( E)$
$$\frac{d}{dt} \E (\eta_t^N(f))=\E(\eta_t^N(L_t(f)))+\sum\limits_{i,j=1}^N\E\left(\frac{V_t(\xi_t^i)\left( f(\xi_t^j)-f(\xi_t^i) \right)}{N^2}\right)$$
and
$$\frac{d}{dt}\E( \gamma_t^N(f))=\E(\gamma_t^N(L_t(f)))-\E(\eta_t^N(V_t)\gamma_t^N(f))+\E\left(\gamma_t^N(1)\sum\limits_{i,j=1}^N\frac{V_t(\xi_t^i)\left( f(\xi_t^j)-f(\xi_t^i) \right)}{N^2}\right)$$

For $F\in \mathcal{B}_b^{sym} ( E^q)$, let us introduce the useful notation:
$F_i:E^{q-1}\longrightarrow \mathcal{B}_b ( E),$
$$F_i(x_1,...,x_{q-1})(y):=F(x_1,...,x_{i-1},y,x_i,...,x_{q-1})$$
and let us extend $L_t$ to functions in $\mathcal{B}_b^{sym}$:
$$L_t(F)(x_1,...,x_n):=\sum\limits_{i=1}^qL_t(F_i(x_1,...,x_{i-1},x_{i+1},...,x_n))(x_i).$$

We get:

\begin{eqnarray}
\label{Eq:deriv1}
\lefteqn{
\frac{d}{dt} \E((\gamma_t^N)^{\odot q} (F)) }
\\
\nonumber
& = & \E( (\gamma_t^N)^{\odot q} (L_t (F))) - \E(q \eta_t^N(V_t) (\gamma_t^N)^{\odot q }(F))\\
\nonumber
&& + \E\left\{ \gamma_t^N(1)^q\sum_{s\in <q,N>} \sum_{i=1}^q   \sum_{k=1}^N V_t(\xi^{s(i)})\right. \\
\nonumber
&&\left. \times\left( \frac{F_i(\xi^{s(1)}_t,\dots,\xi^{s(i-1)}_t,\xi^{s(i+1)}_t,\dots, \xi^{s(q)}_t)(\xi_t^k) - F(\xi^{s(1)}_t,\dots,\xi^{s(i)}_t,\dots, \xi^{s(q)}_t)}{N\cdot(N)_q} \right)   \right\}
\end{eqnarray}

The second term in the right hand side of  (\ref{Eq:deriv1}) reads:
\begin{eqnarray*}
&&\E\left( -q\frac{\gamma_t^N(1)^q} {N\cdot(N)_q} \sum_{s\in <q,N>}\sum_{k=1}^N F(\xi^{s(1)}_t , \dots , \xi^{s(q)}_t)  V_t(\xi^k_t) \right)=\\
&&\E\left( -q\frac{\gamma_t^N(1)^q} {N\cdot(N)_q} \sum_{s\in <q,N>}F(\xi^{s(1)}_t , \dots , \xi^{s(q)}_t)  (V_t(\xi^{s(1)}_t)+\dots + V_t(\xi^{s(q)}_t)) \right)+\\
&&\E\left( -q\frac{\gamma_t^N(1)^q} {N\cdot(N)_q} \sum_{s\in <q+1,N>} F(\xi^{s(1)}_t , \dots , \xi^{s(q)}_t)  V_t(\xi^{s(q+1)}_t) \right).
\end{eqnarray*}

Similarly, making use of the symmetry properties of $F$, the last term in the right hand side of  (\ref{Eq:deriv1}) reads:

\begin{eqnarray*}
\E\left( q\frac{\gamma_t^N(1)^q}{N\cdot (N)_q}  \sum_{s\in <q,N>}   \sum_{k=1}^N V_t(\xi^{s(q)}_t) F(\xi^{s(1)}_t,\dots,\xi^{s(q-1)}_t,\xi^k_t)\right)-\\
\E\left( \frac{\gamma_t^N(1)^q}{(N)_q}  \sum_{s\in <q,N>}   F(\xi^{s(1)}_t,\dots,\xi^{s(q-1)}_t,\xi_t^{s(q)})(V_t(\xi^{s(1)}_t)+\dots+V_t(\xi^{s(q)}_t))\right)\\
\end{eqnarray*}

The first term in this last sum decomposes then into (making use once again of the symmetry properties of $F$):
\begin{eqnarray*}
\E\left(\frac{\gamma_t^N(1)^q}{N\cdot (N)_q}  \sum_{s\in <q,N>} (V_t(\xi^{s(1)}_t)+\dots+V_t(\xi^{s(q)}_t)) F(\xi^{s(1)}_t,\dots,\xi^{s(q-1)}_t,\xi^{s(q)}_t)\right)+\\
\E\left( q\frac{\gamma_t^N(1)^q}{N\cdot (N)_q}  \sum_{s\in <q,N>}   \sum_{k=1}^{q-1} V_t(\xi^{s(q)}_t) F(\xi^{s(1)}_t,\dots,\xi^{s(q-1)}_t,\xi^{s(k)}_t)\right)+\\
\E\left(q\frac{\gamma_t^N(1)^q}{N\cdot (N)_q}  \sum_{s\in <q+1,N>} F(\xi^{s(1)}_t,\dots,\xi^{s(q-1)}_t,\xi^{s(q)}_t)V_t(\xi^{s(q+1)}_t) \right)
\end{eqnarray*}

Reorganizing the summands in these expansions, we get finally that the two last terms of (\ref{Eq:deriv1}) sum up to :

\begin{eqnarray*}
&&  - \E\left(  \frac{\gamma_t^N(1)^q }{(N)_q} \sum_{s\in <q,N>} F(\xi^{s(1)}_t,\dots,\xi^{s(q)}_t) (V_t(\xi^{s(1)}_t)+\dots+V_t(\xi^{s(q)}_t))  \right)+\\
&&\E \Big{(} \frac{\gamma_t^N(1)^q}{N\cdot (N)_q} \sum_{s\in <q,N>}  \sum_{i,r=1}^q V_t(\xi^{s(i)}_t) \left[F(\xi^{s(1)}_t,\dots,\xi^{s(i-1)}_t,\xi^{s(r)}_t,\xi^{s(i+1)}_t,\dots,\xi^{s(q)}_t)\right.\\
&& ~~~~~~~~~~~~~~~~~~~~~~~~~~~~~~~~~~~~~~~~~~~~~~~~~  \left. - F(\xi^{s(1)}_t,\dots,\xi^{s(q)}_t)\right]   \Big{)}\ .
\end{eqnarray*}
We set
\begin{eqnarray*}
 F_{V_t}(x_1,\dots,x_q) &:=& (V_t(x_1)+\dots+V_t(x_q)) F(x_1,\dots,x_q) \\
F_{V_t}^{(i,r)}(x_1,\dots, x_q) &=& V_t(x_i) (F(x_1,\dots,x_{i-1},x_r,x_{i+1},\dots,x_q)-F(x_1,\dots,x_q)).
\end{eqnarray*}
Then the equations above give
$$
\frac{d}{dt}\mathbb{Q}^N_{t,q}(F) =  \mathbb{Q}^N_{t,q} L_t (F) -  \mathbb{Q}^N_{t,q}(F_{V_t}) + \sum_{1\leq i,r\leq q} \frac{1}{N} \mathbb{Q}^N_{t,q}(F_{V_t}^{(i,r)})\ . 
$$
And so
\begin{eqnarray}\label{fundam}
\mathbb{Q}^N_{t,q}(F) = \E\left(F(\hat{\xi}^1_t,\dots,\hat{\xi}^q_t) \exp\left( -\int_0^t V_t(\hat{\xi}^1_s) +\dots + V_t(\hat{\xi}^q_s) ds\right)\right) \ .\end{eqnarray}
We then obtain Equation (\ref{Eq:FKunnormalized1}) of the theorem by noticing that 
$$
\p(k \text{ rings on} [0,t] ) = \frac{(\lambda t)^k e^{-\lambda t}}{k!}\ .
$$
We obtain Equation (\ref{Eq:FKunnormalized2}) by developping the term $e^{-\lambda t}$ and by noticing that
$$
E_{t,0}(F) = \gamma_t^{\otimes q}(F)\ .
$$
\endproof

\subsection{Wick theorem for interacting particle systems}

We say that two particles of the auxiliary system $\hat\xi^i$, $\hat\xi^j$, $i\not=j$ interact at $t$ if and only if $\hat\xi^i_t$ jumps to $\hat\xi^j$ or $\hat\xi^j_t$ jumps to $\hat\xi^i$ at $t$. 
We say that a trajectory of the auxiliary system of particles $\hat\xi^1_t,\dots,\hat\xi^q_t$ is exactly Wick-coupled on $[0,t]$ if and only each particle of the system has exactly one interaction with another particle between $0$ and $t$ (notice that the existence of such trajectories requires $q$ to be even). We write $\mathcal{W}_t$ for the set of Wick-coupled trajectories on $[0,t]$; the set $W_t$ embeds into the set of trajectories with exactly $q/2$ rings on $[0,t]$.

\begin{theorem}\label{Thm:Wick1}
If $F \in \mathcal{B}^{\text{sym}}_0 ( E^q)$ then, for $r< q/2$, $\partial^r \mathbb{Q}^N_{t,q}(F) =0$.
Moreover, for $q$ even,  
\begin{eqnarray*}
 \partial^{q/2} \mathbb{Q}_{t,q}(F) & =&  \frac{({q(q-1)} V_\infty t)^{q/2}}{(q/2)!}E_{t,q/2}(F)
\end{eqnarray*}
In particular,
\begin{equation}
\label{Eq:majWick}
\limsup_{N\rightarrow +\infty} N^{q/2}|\mathbb{Q}^N_{t,q}(F)| \leq \frac{({q(q-1)} V_\infty t)^{q/2}}{(q/2)!} \Vert F\Vert_\infty .
\end{equation}
\end{theorem}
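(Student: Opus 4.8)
The plan is to exploit the explicit Laurent expansion already obtained in Theorem \ref{Theo:FKunnormalized}, combined with a combinatorial vanishing argument for the terms $E_{t,k}(F)$ when $F \in \Bsym(E^q)$ and $k$ is small. Recall from \eqref{Eq:FKunnormalized2} and the formula for $\partial^r\mathbb{Q}_{t,q}$ that
$$
\partial^r \mathbb{Q}_{t,q}(F) = (q(q-1)V_\infty t)^r \sum_{k=0}^r \frac{(-1)^{r-k}}{k!\,(r-k)!}\, E_{t,k}(F),
$$
so everything reduces to understanding $E_{t,k}(F)$, the conditional expectation of $F^e$ given exactly $k$ rings on $[0,t]$. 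The first key step is to observe that, conditionally on there being $k$ rings, the auxiliary $q$-particle system splits the index set $[q]$ into the particles that have been touched by an interaction and those that have not: a particle $\hat\xi^i$ that never rings (neither as source nor target) simply diffuses according to $L_t$ with the potential weight $\exp(-\int_0^t V_s(\hat\xi^i_s)ds)$, independently of everything else, and its contribution factors out. For $k < q/2$ rings, at most $2k < q$ of the $q$ particles can be touched, so at least one index, say $i$, is untouched; integrating out that coordinate against its marginal law gives exactly $\gamma_t(dx_i)$ acting on $F$ in the $i$-th slot. Since $F\in\Bsym(E^q)$ means $\int_E F(x_1,\dots,x_q)\gamma_t(dx_q)=0$ and $F$ is symmetric, this integral vanishes, hence $E_{t,k}(F)=0$ for all $k<q/2$. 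Plugging this into the displayed formula for $\partial^r\mathbb{Q}_{t,q}$ with $r<q/2$ kills every term in the sum, giving $\partial^r\mathbb{Q}_{t,q}(F)=0$.

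For the second assertion, take $r=q/2$ (so $q$ is even). By what we just proved, only the $k=q/2$ term survives in the sum for $\partial^{q/2}\mathbb{Q}_{t,q}(F)$, yielding
$$
\partial^{q/2}\mathbb{Q}_{t,q}(F) = \frac{(q(q-1)V_\infty t)^{q/2}}{(q/2)!}\, E_{t,q/2}(F),
$$
which is the claimed identity — no further work is needed here since the combinatorial coefficient $(-1)^{q/2-q/2}/( (q/2)!\,0!)$ is just $1/(q/2)!$. One may additionally remark (matching the discussion of $\mathcal{W}_t$ preceding the theorem) that with exactly $q/2$ rings, a nonzero contribution to $E_{t,q/2}(F)$ can only come from configurations in which every particle is touched exactly once, i.e. Wick-coupled trajectories, since otherwise some particle is again untouched and the $\Bsym$ condition forces the conditional contribution to zero; but this refinement is not logically required for the statement.

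Finally, the bound \eqref{Eq:majWick} follows from the Laurent expansion \eqref{Eq:FKunnormalized2}: since $\gamma_t^{\otimes q}(F)=E_{t,0}(F)=0$ and all lower derivatives vanish, the expansion starts at order $N^{-q/2}$ with leading coefficient $\partial^{q/2}\mathbb{Q}_{t,q}(F)$, and the differentiability-to-all-orders statement of Theorem \ref{Theo:FKunnormalized} guarantees $N^{q/2}\mathbb{Q}^N_{t,q}(F)\to\partial^{q/2}\mathbb{Q}_{t,q}(F)$; taking $\limsup$ and using $|E_{t,q/2}(F)|\leq \Vert F\Vert_\infty$ (as $E_{t,q/2}$ is an average of $F^e$ whose modulus is bounded by $\Vert F\Vert_\infty$ because the exponential factor is $\leq 1$) gives the estimate. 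The main obstacle I anticipate is making the "untouched particle factors out and integrates to $\gamma_t$" step fully rigorous: one must check carefully that conditioning on the \emph{number} of rings (rather than their locations) does not introduce hidden correlation between the untouched coordinate and the rest — this is where the independence of the diffusions and the Poisson structure, together with the $N$-independence of $E_{t,k}$ noted after Definition \ref{Def:aux}, must be invoked precisely; the rest is bookkeeping on the expansion from Theorem \ref{Theo:FKunnormalized}.
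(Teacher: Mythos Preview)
Your proposal is correct and follows essentially the same route as the paper: the paper's proof is in fact even terser, observing only that $F\in\Bsym(E^q)$ implies $\int_E F(x_1,\dots,x_q)\gamma_t(dx_i)=0$ for every $i$, and that with fewer than $q/2$ rings at least one auxiliary particle does not interact with the others, so $E_{t,k}(F)=0$ for $k<q/2$; everything else is read off Theorem~\ref{Theo:FKunnormalized}. Your added detail on the limsup bound and on why the untouched particle's marginal is exactly $\gamma_t$ is helpful clarification rather than a different argument.
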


\begin{proof}[Proof of Thm \ref{Thm:Wick1}]
Recall that $F$ is symmetric and belongs to $\mathcal{B}^{\text{sym}}_0 ( E^q)$ so that, for any $i\leq q$, $\int_E F(x_1,...,x_q)\gamma_t(dx_i)=0$. 

The proof follows then from Thm~\ref{Theo:FKunnormalized} together with the observation that, if there are strictly less than $q/2$ rings, then at least one of the particles does not interact with the others so that, if $k<\frac{q}{2}$, $E_{t,k}(F)=0$. 
\end{proof}

\begin{corollary}
With the assumptions of Thm~\ref{Theo:FKunnormalized} and $q$ even, we have:
\begin{eqnarray*}
 \partial^{q/2} \mathbb{Q}_{t,q}(F) & =&  \frac{q!}{(q/2)!} (V_\infty t)^{q/2} \E((F^e(\hat{\xi}^1_s, \dots, \hat{\xi}^q_s)_{0\leq s \leq t}) | \mathcal{W}_t).
\end{eqnarray*}

If, furthermore, $F=(f^1 \otimes \dots \otimes f^q)_{\text{sym}}$ (with $f^1,\dots,f^q$ centered with respect to $\gamma_t$) then we have the Wick-type expansion
\begin{eqnarray*}
\E(F^e((\hat{\xi}^1_s, \dots, \hat{\xi}^q_s)_{0\leq s \leq t})| \mathcal{W}_t) = \frac{2^{q/2} (q/2)!}{q!} \sum_{I_q\in\mathcal{I}_q} \prod_{\{i,j\} \in I_q} E'_{t,1}(f^i \otimes f^j)
\end{eqnarray*}
where the sum is over the set ${\mathcal{I}_q}$ of partitions $I_q$ of $[q]$ into pairs, $i,j$ being paired if $\{i,j\} \in I_q$ and where, for a function $G$ of two variables
$$
E'_{t,1}(G) := \E(G(\hat{\mu}^1_t,\hat{\mu}^2_t)\exp\left(-\int_0^t [V_s(\hat{\mu}^1_s) + V_s(\hat{\mu}^2_s)] ds  \right)|\text{one ring on } [0,t])
$$
with $\hat{\mu}^1_t,\hat{\mu}^2_t$ an auxiliary system of two particles (defined with the same rules as a general auxiliary system of $q$ particles).

\end{corollary}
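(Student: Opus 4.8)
The first identity is essentially a change of normalization in Theorem~\ref{Thm:Wick1}. From Theorem~\ref{Thm:Wick1} we already know that $\partial^{q/2}\mathbb{Q}_{t,q}(F) = \frac{(q(q-1)V_\infty t)^{q/2}}{(q/2)!}E_{t,q/2}(F)$, and by definition $E_{t,q/2}(F) = \E(F^e((\hat\xi^1_s,\dots,\hat\xi^q_s)_{0\le s\le t})\mid q/2 \text{ rings on }[0,t])$. So the plan is simply to pass from "conditioning on exactly $q/2$ rings" to "conditioning on $\mathcal{W}_t$" (exact Wick-coupling). The key observation, already isolated in the proof of Theorem~\ref{Thm:Wick1}, is that when $F\in\mathcal{B}^{\text{sym}}_0(E^q)$ the contribution of any trajectory with exactly $q/2$ rings that is \emph{not} Wick-coupled vanishes: if some particle has no interaction, integrating $F$ against $\gamma_t$ in that coordinate gives $0$ (one must check here that the law of an uninteracting particle's Feynman-Kac weighted position is exactly $\gamma_t$, which follows from the single-particle case of~(\ref{fundam})). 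Hence $E_{t,q/2}(F) = \p(\mathcal{W}_t\mid q/2\text{ rings})\cdot \E(F^e\mid\mathcal{W}_t)$. It remains to compute the conditional probability $\p(\mathcal{W}_t \mid q/2 \text{ rings})$: given $q/2$ rings, each ring is uniformly and independently assigned to one of the $q(q-1)$ ordered pairs $(i,j)$, and one counts the proportion of such assignments that realize a perfect matching of $[q]$ with one ring per pair. That count is $\#\mathcal{I}_q \cdot (q/2)! \cdot 2^{q/2}$ (choose the matching, order the $q/2$ rings among the pairs, choose an orientation for each), divided by $(q(q-1))^{q/2}$; simplifying with $\#\mathcal{I}_q = q!/(2^{q/2}(q/2)!)$ gives $\p(\mathcal{W}_t\mid q/2\text{ rings}) = q!/(q(q-1))^{q/2}$. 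Substituting back yields $\partial^{q/2}\mathbb{Q}_{t,q}(F) = \frac{(q(q-1)V_\infty t)^{q/2}}{(q/2)!}\cdot\frac{q!}{(q(q-1))^{q/2}}\cdot\frac{1}{q!}\cdot q!\,\E(F^e\mid\mathcal{W}_t)$; collecting powers leaves exactly $\frac{q!}{(q/2)!}(V_\infty t)^{q/2}\E(F^e\mid\mathcal{W}_t)$, the claimed formula. (The bookkeeping of the combinatorial constants is the only delicate point, and it is routine.)

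For the second identity, assume $F = (f^1\otimes\dots\otimes f^q)_{\text{sym}}$ with each $f^i$ centered against $\gamma_t$. On the event $\mathcal{W}_t$ the $q$ particles split into $q/2$ pairs, each pair interacting exactly once and otherwise evolving independently (both before the unique interaction, via independent diffusions, and after, since once $\hat\xi^i$ jumps to $\hat\xi^j$ the two continue as a single trajectory that is independent of the other pairs). I would first condition on \emph{which} perfect matching $I_q\in\mathcal{I}_q$ is realized; by exchangeability of the particle labels every matching is equally likely given $\mathcal{W}_t$, so it carries weight $1/\#\mathcal{I}_q = 2^{q/2}(q/2)!/q!$. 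Conditionally on a fixed matching, the exponential weight factorizes over pairs and so does $F^e$ up to the symmetrization in $F$; but $F$ is the full symmetrization of $f^1\otimes\cdots\otimes f^q$, and for a \emph{fixed} matching one checks — using that the $f^i$ are centered, so any term of the symmetrization that pairs two coordinates carrying the "wrong" test functions relative to a non-matching permutation contributes a $\gamma_t$-integral of a single $f$, hence $0$ — that only the permutations respecting the pair structure survive. I would then match terms carefully to see that the sum over $\mathcal{I}_q$ of $\prod_{\{i,j\}\in I_q}\E(f^i(\hat\mu^1_t)f^j(\hat\mu^2_t)e^{-\int_0^t[V_s(\hat\mu^1_s)+V_s(\hat\mu^2_s)]ds}\mid\text{one ring}) = \prod_{\{i,j\}\in I_q} E'_{t,1}(f^i\otimes f^j)$ emerges with the constant $2^{q/2}(q/2)!/q!$ in front.

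The main obstacle is the second step: showing that, conditionally on $\mathcal{W}_t$ and on a fixed perfect matching, $\E(F^e\mid\cdot)$ reduces to the product of the two-particle quantities $E'_{t,1}(f^i\otimes f^j)$ over the pairs. Two things must be handled with care. First, one must verify that an interacting pair in the $q$-particle auxiliary system genuinely has the same law as the two-particle auxiliary system $(\hat\mu^1,\hat\mu^2)$ conditioned on exactly one ring — this is where the remark in the excerpt that $E_{t,k}$ does not depend on $N$ is used, together with the fact that the Poisson structure makes distinct pairs independent. Second, and more subtly, one must deal with the symmetrization defining $F$: after conditioning on a matching, the "diagonal" terms of the symmetrization (those not compatible with the matching) must be shown to vanish by the centering hypothesis on the $f^i$, exactly as in the proof of Theorem~\ref{Thm:Wick1}; getting the surviving combinatorial factor to cancel against $\#\mathcal{I}_q$ and reproduce the stated prefactor is the place where an error is easiest to make. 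Everything else — the factorization of the exponential weight over pairs, the independence of distinct pairs, the uniformity of the matching given $\mathcal{W}_t$ — is a direct consequence of the definition of the auxiliary system and of standard Poisson point process facts already invoked in the excerpt.
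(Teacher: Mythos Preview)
Your argument for the first identity is correct and matches the paper's: both compute $\p(\mathcal{W}_t\mid q/2\text{ rings})=q!/(q(q-1))^{q/2}$ and plug it into Theorem~\ref{Thm:Wick1}. (The intermediate ``$\cdot\frac{1}{q!}\cdot q!$'' in your substitution is spurious but harmless.)

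For the second identity your overall strategy --- condition on the realized matching, then factorize over pairs by independence --- is exactly the paper's. But there is a genuine error in how you handle the symmetrization. You claim that, for a fixed matching, the terms of $(f^1\otimes\cdots\otimes f^q)_{\text{sym}}$ coming from permutations not ``respecting the pair structure'' vanish because some coordinate integrates against $\gamma_t$. This is false: on $\mathcal{W}_t$ \emph{every} particle interacts with exactly one other, so no coordinate is free and no $\gamma_t$-integral of a single $f^i$ ever appears. For instance, with $q=4$, matching $\{1,2\},\{3,4\}$, and the transposition $\tau=(1\ 3)$, the corresponding term is $E'_{t,1}(f^3\otimes f^2)\,E'_{t,1}(f^1\otimes f^4)$, which has no reason to vanish.

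The clean fix (and the paper's implicit route) is to observe that the $q$ particles of the auxiliary system are exchangeable and $\mathcal{W}_t$ is a symmetric event, so
\[
\E\bigl((f^1\otimes\cdots\otimes f^q)_{\text{sym}}^e\,\big|\,\mathcal{W}_t\bigr)
=\E\bigl((f^1\otimes\cdots\otimes f^q)^e\,\big|\,\mathcal{W}_t\bigr),
\]
i.e.\ the symmetrization can simply be dropped. Then conditioning on the matching (uniform over $\mathcal{I}_q$) and factorizing over the $q/2$ independent pairs gives directly $\frac{1}{\#\mathcal{I}_q}\sum_{I_q}\prod_{\{i,j\}\in I_q}E'_{t,1}(f^i\otimes f^j)$, with $1/\#\mathcal{I}_q=2^{q/2}(q/2)!/q!$. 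Your approach can also be salvaged without this shortcut --- keeping the symmetrization, \emph{all} $q!$ permutation terms contribute, and after averaging over matchings the bookkeeping collapses to the same formula --- but none of them vanish by centering.
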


\begin{proof}
Indeed, from the enumeration of possible interactions between particles and the general properties of Poisson processes, we have:
$$\mathbb{P}(\mathcal{W}_t|\text{$q/2$ rings on}\ [0,t])=\frac{q!}{(q(q-1))^{q/2}}.$$
The first part of the Corollary follows.

The last part follows from the observation that the distributions of two particles (or of blocks of particles) are independent conditionnally to the assumption that they do not interact (either directly or through interactions with other particles), so that quantities such as $ \E(F^e((\hat{\xi}^1_s, \dots, \hat{\xi}^q_s)_{0\leq s \leq t})| \mathcal{W}_t) $ may be computed by disjoint integrations over blocks (pairs in this particular case) of interacting particles. Since $Card(\mathcal{I}_q)=\frac{q!}{2^{q/2} (q/2)!}$, the Corollary follows.
\end{proof}

\subsection{Expansion of the normalized measure}

The purpose of the present section is to prove for the measures associated to the empirical $U$-statistics $(\eta^N_t)^{\odot q}$ properties similar to the ones obtained in the unnormalized case. We conclude the section with a Wick formula in this setting.

We set, for $F\in \mathcal{B}_b(E^q)$:

$$
\p_{t,q}^N(F) = \E( (\eta^N_t)^{\odot q} (F))\ .
$$

\begin{theorem}
 \label{Theo:FKnormalized}
The sequence $(\mathbb{P}^N_{t,q})$ is differentiable up to any order.

\end{theorem}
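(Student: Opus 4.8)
The plan is to relate the normalized empirical $U$-statistic $(\eta_t^N)^{\odot q}$ to the unnormalized one $(\gamma_t^N)^{\odot q}$ via the identity $(\gamma_t^N)^{\odot q}(F) = \gamma_t^N(1)^q (\eta_t^N)^{\odot q}(F)$ already recorded in the excerpt, together with $\gamma_t^N(1) = \exp(-\int_0^t \eta_s^N(V_s)\,ds)$. Writing $F = \mathbf{1}^{\otimes q}$ in Theorem~\ref{Theo:FKunnormalized} shows that $\mathbb{Q}^N_{t,q}(\mathbf{1}^{\otimes q}) = \E(\gamma_t^N(1)^q)$ is differentiable to all orders (it is in fact a genuine Laurent polynomial in $1/N$ coming from the Poisson weights), with limit $\gamma_t(1)^q = \gamma_t^{\otimes q}(\mathbf 1)$. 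The first step is therefore to observe that $\mathbb{P}^N_{t,q}(F) = \E\big( (\gamma_t^N)^{\odot q}(F)\,/\,\gamma_t^N(1)^q \big)$ and to exploit the fact that $\gamma_t^N(1)$ is bounded away from $0$ and $\infty$ uniformly in $N$ and $\omega$ (indeed $e^{-V_\infty t} \le \gamma_t^N(1)\le 1$), so that division by $\gamma_t^N(1)^q$ is a harmless bounded operation.

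Second, I would set $A^N := (\gamma_t^N)^{\odot q}(F) = \gamma_t^N(1)^q (\eta_t^N)^{\odot q}(F)$ and $B^N := \gamma_t^N(1)^q$, and use the expansion machinery: since $\E(A^N) = \mathbb{Q}^N_{t,q}(F)$ and $\E(B^N) = \mathbb{Q}^N_{t,q}(\mathbf 1^{\otimes q})$ are both differentiable to all orders by Theorem~\ref{Theo:FKunnormalized}, one wants to conclude the same for $\E(A^N/B^N)$. The clean way is to write $1/B^N = 1/\E(B^N) \cdot 1/(1 + (B^N - \E(B^N))/\E(B^N))$ and expand the geometric series; because $\E(B^N)$ has a strictly positive limit $\gamma_t(1)^q$ and $B^N$ is bounded, each term is controlled. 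Then $\mathbb{P}^N_{t,q}(F) = \E(A^N)/\E(B^N) + \text{corrections}$, where the corrections involve joint moments $\E\big((A^N - \E A^N)(B^N - \E B^N)^m\big)$. To make this rigorous one must show these mixed centered moments are themselves of order $O(1/N)$ (or more precisely admit $1/N$-expansions); this follows by applying Theorem~\ref{Theo:FKunnormalized}, or rather the product/covariance version of it, to the particle system evaluated on the enlarged function space $E^{q'}$ for $q' \ge q$ — the product $A^N (B^N)^m$ can be rewritten as an empirical (unnormalized) $U$-statistic in a larger number of particles, up to boundary terms of lower order in $N$, exactly as the terms were reorganized in the proof of Theorem~\ref{Theo:FKunnormalized}.

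Concretely, the third step is the bookkeeping: establish, by the same differential-equation-plus-Poisson-rings argument used for $\mathbb{Q}^N_{t,q}$, that for any finite family of bounded symmetric functions the associated joint empirical moments $\E\big(\prod_\ell (\gamma_t^N)^{\odot q_\ell}(F_\ell)\big)$ are differentiable to all orders in $1/N$; then $\mathbb{P}^N_{t,q}(F)$, being obtained from such quantities through the convergent expansion of $1/B^N$ with uniformly bounded remainder, inherits a $1/N$-expansion to every order. Uniform boundedness of the discrete derivatives $\partial^k \mathbb{P}^N_{t,q}$ follows from the uniform bounds $e^{-V_\infty t}\le B^N \le 1$ together with $\|F\|_\infty < \infty$ and the uniform bounds on the derivatives $\partial^k \mathbb{Q}_{t,q}$ from Theorem~\ref{Theo:FKunnormalized}.

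The main obstacle I anticipate is the second step: turning the products $A^N(B^N)^m$ — i.e.\ products of $U$-statistics and of exponentials $\gamma_t^N(1)$ over the \emph{same} particle cloud — into objects to which Theorem~\ref{Theo:FKunnormalized} (or a mild extension of it) applies, and controlling the combinatorial "diagonal" or boundary terms that arise when indices in the different $U$-statistics coincide. This is precisely the continuous-time analogue of the delicate combinatorics of~\cite{DPR-2006}, and although the continuous setup simplifies matters, carefully identifying which terms are genuinely $O(1/N^k)$ is where the real work lies. Everything else — the geometric expansion of $1/B^N$, the uniform bounds, the passage to the limit — is routine once that step is in place.
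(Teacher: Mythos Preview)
Your overall architecture is the same as the paper's --- expand $1/B^N$ to finite order and control the remainder --- but there is a real gap at the point you yourself flag as ``where the real work lies''. Rewriting $A^N(B^N)^m$ as a single unnormalized $U$-statistic does work (indeed $A^N(B^N)^m=(\gamma_t^N)^{\odot q(m+1)}(F\otimes\mathbf 1^{\otimes qm})$), and Theorem~\ref{Theo:FKunnormalized} then shows that each $\E(A^N(B^N)^m)$ admits a full $1/N$-expansion. That is enough for the \emph{finite} part of your expansion to be differentiable. It is \emph{not} enough for the remainder: to get $R_M=O(1/N^{p})$ you need the Wick-type decay $\E\big((B^N-\E B^N)^{2M}\big)=O(1/N^{M})$, and this does \emph{not} follow from ``differentiability to all orders'' of the individual moments. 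The test function $F\otimes\mathbf 1^{\otimes qm}$ is not centered (the $\mathbf 1$ factors kill any hope of applying Theorem~\ref{Thm:Wick1}), and centering $B^N$ around a scalar does not translate into centering a test function variable-by-variable. So from your mechanism you only get that the centered moment is $O(1/N)$ (the leading term cancels), not $O(1/N^{M})$; the remainder is then stuck at $O(1/N)$ regardless of how large you take $M$, and you cannot go beyond first-order differentiability.

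The paper's proof supplies precisely the missing ingredient. It centers around $\gamma_t(1)$ (not $\E B^N$) and uses the path-integral identity
\[
1-\frac{\gamma_t^N(1)}{\gamma_t(1)}=\int_0^t \gamma_s^N\big(\theta_s(V_s)\big)\,ds,
\qquad \theta_s(V_s)=\frac{V_s-\eta_s(V_s)}{\gamma_s(1)},
\]
so that $(u_t^N)^k$ becomes a $k$-fold time integral of a product of one-particle functionals each \emph{centered} with respect to $\gamma_s$. These multi-time products are handled not by Theorem~\ref{Theo:FKunnormalized} alone but by an enlarged auxiliary system $(\check\xi)$ with ``parasites'' and a ring-counting lemma (Lemma~\ref{crucial}): if there are fewer than $k/2$ rings-plus-parasites, some centered factor integrates to zero. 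That is what forces $\E\big((u_t^N)^{2p}\big)=O(1/N^{p})$ and hence the right remainder bound. Your same-time $U$-statistic rewriting cannot see this multi-time centered structure; to close the argument you would have to introduce essentially the same representation and the same vanishing lemma.
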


Let us mention that an explicit formula (that we omit) for the derivatives $\partial^l \mathbb{P}_{t,q}$ follows immediately from Fla~\ref{formm} in our proof.

\proof
Let us expand first $(\eta_t^N)^{\odot q }$ in terms of $(\gamma^N_t)^{\odot q}$. Let, in the following, $F$ be a bounded symmetric function of $q$ variables. We first have:
\begin{eqnarray*}
(\eta_t^N)^{\odot q } (F) & = & (\gamma^N_t)^{\odot q} (F) \gamma^N_t(1)^{-q}\\
& = & (\gamma^N_t)^{\odot q} (F) \frac{1}{\gamma_t(1)^q} \frac{1}{\left(1-\left( 1- \frac{\gamma^N_t(1)}{\gamma_t(1)} \right) \right)^q}\ .
\end{eqnarray*}
We set $\check{F}_t:=\frac{F}{\gamma_t(1)^q}$ and get:

$$
(\eta_t^N)^{\odot q } (F) = (\gamma^N_t)^{\odot q} (\check{F}_t) \frac{1}{\left(1-\left( 1- \frac{\gamma^N_t(1)}{\gamma_t(1)} \right) \right)^q}=(\gamma^N_t)^{\odot q} (\check{F}_t)\frac{1}{(1-u^N_t)^q}\ ,
$$
with $$u^N_t=1-\frac{\gamma^N_t(1)}{\gamma_t(1)} \ .$$

Recall the decomposition (that holds for any $m$, see \cite[Lemma 4.11]{DPR-2006}):
\begin{eqnarray*}
 \frac{1}{(1-u^N_t)^q} &=& \sum_{0\leq k \leq m} (q-1+k)_k \frac{(u^N_t)^k}{k!} + (u^N_t)^{m} \sum_{1\leq k \leq q} C_{q+m}^{k+m} \left( \frac{u^N_t}{1-u^N_t}\right)^k
\end{eqnarray*}
from which it follows that:
\begin{eqnarray*}
\nonumber
 (\eta_t^N)^{\odot q } (F) & = & (\gamma^N_t)^{\odot q} (\check F_t) \sum_{0\leq k \leq m} (q-1+k)_k \frac{(u^N_t)^k}{k!}  \\
\nonumber
&&~~~~~ + (\gamma^N_t)^{\odot q} (\check{F}_t)   (u^N_t)^{m} \sum_{1\leq k \leq q} C_{q+m}^{k+m} \left( \frac{u^N_t}{1-u^N_t}\right)^k\\
&&~~~~=: (1)+(2)\ .
\end{eqnarray*}

For an arbitrary function $f$ on $E$, we set: $\theta_t(f):=\frac{f-\eta_t(f)}{\gamma_t(1)}$. Then, recalling the equality $\gamma_t(1) = \exp\left( -\int_0^t \eta_s(V_s) ds \right)$, we get
\begin{eqnarray*}
\frac{\gamma^N_t(1)}{\gamma_t(1)} &=& e^{ -\int_0^t [\eta_u^N(V_u)-\eta_u(V_u)] du }\\
&=&1- \int_0^t [\eta_u^N(V_u)-\eta_u(V_u)] \frac{\gamma^N_u(1)}{\gamma_u(1)}du\\
&=&1-\int_0^t\gamma_u^N(\theta_u(V))du \ ,
\end{eqnarray*}
where the last identity follows from the rewriting $$[\eta_u^N(V_u)-\eta_u(V_u)]\frac{\gamma^N_u(1)}{\gamma_u(1)}=\eta_u^N\left( \frac{V_u-\eta_u(V_u)}{\gamma_u(1)} \right)\gamma^N_u(1)=\gamma_u^N(\theta_u(V_u)).$$

In particular,  we get, for an arbitrary $k>0$, 
\begin{equation}
\label{Eq:decint}
 \left(1-\frac{\gamma^N_t(1)}{\gamma_t(1)}   \right)^k = \int_{s_1,\dots s_k \in [0,t] }\prod_{i=1}^k \gamma_{s_i}^N(\theta_{s_i}(V_{s_i})) ds_1 \dots ds_k
\end{equation}
$$=k!\ \int_{0\leq s_1\leq \dots \leq s_k\leq t  }\prod_{i=1}^k \gamma_{s_i}^N(\theta_{s_i}(V_{s_i})) ds_1 \dots ds_k.$$
We therefore have
\begin{eqnarray}
\nonumber
(1) &=& 
 \sum_{0\leq k \leq m} (q-1+k)_k \int_{0\leq s_1 \leq \dots\leq s_k \leq t} \left[ \prod_{i=1}^k \gamma_{s_i}^N(\theta_{s_i}(V_{s_i}))\right](\gamma^N_t)^{\odot q} (\check{F}_t)  ds_1 \dots ds_k \\
&=&  \sum_{0\leq k \leq m}  (q-1+k)_k \times \int_{0\leq s_1 \leq \dots\leq s_k \leq t} \gamma^{k,q,N}_{{\mathbf s},t}( S^{k,q}_{s_1,\dots,s_k,t}(F) )ds_1\dots ds_k \label{Eq:dev1}
\end{eqnarray} 
with ${\mathbf s}:=\{s_1,\dots,s_k\}$, $\gamma^{k,q,N}_{{\mathbf s},t}= \gamma^N_{s_1}\otimes \dots \gamma^N_{s_k} \otimes (\gamma^N_t)^{\odot q} $, $S^{k,q}_{{\mathbf s},t}(F) = \theta_{s_1}(V_{s_1}) \otimes \dots \otimes \theta_{s_k}(V_{s_k}) \otimes \check{F}_t$. 

We introduce the operators (for $G$ of $k+q$ variables, $j\leq k$)

$$
D_{j}G(x_1,\dots,x_{k+q}) = \frac{N-q-k+j}{N} G(x_{1},\dots,x_{k+q})+ \frac{1}{N}\sum_{j+1\leq i \leq k+q} G_j(x_1,\dots,x_{k+q})(x_i)
$$

We then have, for any empirical measure $m(x)=\frac{1}{N}\sum_{1\leq i \leq N} \delta_{x_i}$ and for any function $G$ of $k+q\leq N$ variables and any measure $\mu$ on $E^{j-1}$
$$
\mu\otimes (m(x) \otimes m(x)^{\odot (k+q-j)}) (G) =\mu \otimes m(x)^{\odot (k+q-j+1)} (D_{j}G)\ .
$$
For $G$ function of $q+k$ variables and $s\leq t$ and $j\leq k+q$, we define then the Markov operator
\begin{eqnarray}
\label{Eq:defRq}
\lefteqn{R^j_{s,t} G(x_1,\dots,x_{k+q}) =}\\
\nonumber &&\E(G(x_1,\dots,x_{k+q-j},\hat{\xi}^{1}_{t-s},\dots,\hat{\xi}^j_{t-s})e^{-\int_0^{t-s}(V_{s+u}(\hat\xi_u^1)+\dots +V_{s+u}(\hat\xi_u^j))du}\\
\nonumber
&& ~~~~~~~~~~~~~~~~~~~~~~~~~|\hat{\xi}^1_0=x_{k+q-j+1},\dots,\hat{\xi}^j_{0}=x_{k+q})\ ,
\end{eqnarray}
where the $\hat\xi^i$ are defined as before, excepted for the initial condition that reads now $\hat\xi_0^i=x_{k+i}$.

Reasoning as before (and using the auxiliary system $\hat\xi_s^i$ as in Sect.~\ref{unnorm}), we get  for $s_k\leq t$ and $H$ a function of $k+q$ variables, $\mu$ a measure on $E^{j-1}$,
\begin{eqnarray*}
\E(\mu\otimes (\gamma^N_{s_k} \otimes (\gamma^N_t)^{\odot q}) (H)|{\cal F}_{s_k})
&=&  \mu \otimes(\gamma^N_{s_k} \otimes (\gamma^N_{s_k})^{\odot q} )R^{q}_{s_k,t} (H)\\
&=& \mu \otimes (\gamma^N_{s_k})^{\odot (q+1)} D_{q+1} R^{q}_{s_k,t} (H)\ ,
\end{eqnarray*}
where ${\cal F}_{t}$ stands for the natural filtration on the probability space underlying the particle system. Here, we took advantage of the Markovian properties of the system and of Eq~\ref{fundam}.
So, by recurrence~:

\begin{eqnarray*}
\E(\gamma^{k,q,N}_{{\mathbf s},t}( G )) = 
(\eta_0)^{\otimes(k+q)}  R^{k+q}_{0,s_1} D_1 R^{k-1+q}_{s_1,s_2} \dots D_k R^{q}_{s_k,t}( G)\ .
\end{eqnarray*}

We proceed now as for the unnormalized measure and introduce still another more general auxiliary particle system $({\check{\xi}}^1_t,\dots,\check{\xi}^{k+q}_t)_{t\geq 0}$. The system depends on $k$ (and $q$), but we do not emphasize this dependency to abbreviate the notations. This should not create ambiguities since the dependency on $k$ should be obvious from the context in the following formulas.

Let $S_1,S_2,\dots,S_k$ be the order statistics of $k$ uniform random variables in $[0,t]$ (we take $k$ i.i.d. uniform variables in $[0,t]$ and sort them). The $S_i$ are independent of all the other variables. We set $S_i:=t$ for $k<i\leq k+q$. The system (which agrees with the auxiliary system $\hat\xi^i_t$ when $k=0$) is defined as follows:
\begin{itemize}
 \item The law of $(\check{\xi}^1_0,\dots,\check{\xi}^{k+q}_0)$ is $\eta_0^{\otimes k+q}$
 \item The particles $\check{\xi}^i_s$ diffuse according to $L_s$ and undergo the following jumps: 
\item Each couple $(i,j)\in [q+k]^2$ has an exponential clock of parameter $V_\infty/N$. At a ringing time $s$:
\begin{itemize}
 \item If  $s>S_i$ or $s>S_j$, then nothing happens.
\item Else,
$$
\check{\xi}^i_s 
\begin{cases}
 \leftarrow \check{\xi}^j_s & \text{ with proba. } \frac{V_s(\check{\xi}^i)}{V_\infty}\\
\leftarrow \check{\xi}^i_s   & \text{ with proba. } 1-\frac{V_s(\check{\xi}^i)}{V_\infty}
\end{cases}
$$
\item At $s=S_i$, we make the following replacement (with $U$ sampled uniformly in $\{i+1,\dots,k+q\}$) 
$$
\check{\xi}^U_s \leftarrow \check{\xi}^i_s \text{ with proba. } \frac{k-i+q}{N}\ .
$$
We say that there has been a parasite at $s$, so that the system has at most $k$ parasites.
\end{itemize}

\end{itemize}

With the notation
\begin{eqnarray*}
\lefteqn{
\check S_{{\mathbf S},t}^{k,q} (\check{\xi}^1_{S_1},\dots,\check{\xi}^k_{S_k},\check{\xi}^{k+1}_t,\dots,\check{\xi}^{k+q}_t)
}
\\
&:=&S_{{\mathbf S},t}^{k,q} (\check{\xi}^1_{S_1},\dots,\check{\xi}^k_{S_k},\check{\xi}^{k+1}_t,\dots,\check{\xi}^{k+q}_t)e^{-\int_0^{S_1}\dots\int_0^t V_u(\check\xi_u^1)du\dots V_u(\check\xi_u^{k+q})du}
\ ,
\end{eqnarray*}
the equation (\ref{Eq:dev1}) gives

\begin{eqnarray*}
\E((1)) & = & \sum_{0\leq k \leq m} \frac{(k-1+q)_k}{k!}\E(\check S_{{\mathbf S},t}^{k,q} (\check{\xi}^1_{S_1},\dots,\check{\xi}^k_{S_k},\check{\xi}^{k+1}_t,\dots,\check{\xi}^{k+q}_t)) \\
&& =\sum_{0\leq k \leq m} \frac{(k-1+q)_k}{k!} \\
&&~~~~~~~~~~\times \E\left\{ \sum_{r=0}^{+\infty}  \frac{\left( \Lambda_S V_\infty/N\right)^re^{-\Lambda_s V_\infty/N}}{r!} \E(\check S_{{\mathbf S},t}^{k,q} (\check{\xi}^1_{S_1},\dots)|r \text{ rings },S_1,\dots,S_k) \right\}
\end{eqnarray*}
with 
$$
\Lambda_S := q(q-1)(t-S_k) +(q+1)q(S_k-S_{k-1})+\dots +(q+k)(q+k-1) (S_1-0)\ ,
$$
for which we will use the crude bound $\Lambda_S \leq (q+k)^2 t$.

We set 
$$
\bar{E}_{k,r,l} = \E( \Lambda_S^l \E(\check S_{{\mathbf S},t}^{k,q} (\check{\xi}^1_{s_1},\dots)|r \text{ rings },S_1,\dots,S_k))\ ,
$$
and notice that the parasites are sampled independently of the $S_i$ and of the ringing times, so that:
$$
\bar{E}_{k,r,l} = \sum_{i=0}^k\p(i~\text{parasites})\E( \Lambda_S^l \E(\check S_{{\mathbf S},t}^{k,q} (\check{\xi}^1_{S_1},\dots)|i ~\text{parasites}, r~\text{rings},S_1,\dots,S_k))\ .$$
The probability $\p(i~ \text{parasites})$ depends on $t$, $q$ and $k$, but we omit the corresponding indices to simplify the notation.
The following lemma is crucial:
\begin{lemma}\label{crucial}
For $i+r<\frac{k}{2}$, we have:
$$\bar{D}_{k,r,l,i}:=\E( \Lambda_S^l \E(\check S_{{\mathbf S},t}^{k,q} (\check{\xi}^1_{s_1},\dots)|i ~\text{parasites}, r~\text{rings},S_1,\dots,S_k))=0\ ,$$ so that:
$$
\bar{E}_{k,r,l} = \sum_{i=(\left\lceil {\frac{k}{2}}\right\rceil-r)^+}^k\p(i~ \text{parasites})\bar{D}_{k,r,l,i}\ .$$
where $\left\lceil x\right\rceil$ stands for $\inf\{k\in \N:k\geq x\}$.
\end{lemma}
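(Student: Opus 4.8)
The plan is to imitate the proof of Theorem~\ref{Thm:Wick1}: I would show that when $i+r<k/2$ at least one of the first $k$ particles of the auxiliary system $(\check\xi^1,\dots,\check\xi^{k+q})$ is left untouched by every ring and every parasite, and that integrating such a particle out against the centred test function $\theta_{S_a}(V_{S_a})$ forces a zero factor.

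\textbf{Counting.} Fix a realization with $r$ rings and $i$ parasites. Each ring is attached to an ordered couple in $[k+q]^2$, so it meets at most two of the particles $\check\xi^1,\dots,\check\xi^k$; each parasite is attached to a source index (the one whose clock $S_c$ fires) and a target $U$, so it again meets at most two of $\check\xi^1,\dots,\check\xi^k$. Hence at most $2(r+i)$ of these $k$ particles are involved in some event, and if $2(r+i)<k$, i.e. $i+r<k/2$, there is an index $a\le k$ such that $\check\xi^a$ never appears in a ringing couple, is never the source of a parasite (no parasite fires at $S_a$) and is never selected as a parasite target. I would let $A=A(\omega)\le k$ be the smallest such index and set $B_a:=\{A=a\}$, so that the $B_a$ partition the conditioning event.

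\textbf{Integrating out the isolated particle.} On $B_a$ the trajectory $(\check\xi^a_s)_{0\le s\le S_a}$ is modified by no ring and no parasite, hence it is an autonomous $L$-diffusion issued from $\eta_0=\gamma_0$ driven only by its own initial value and noise; it is therefore independent of the clocks, of the jump and parasite coins, of the uniform choices $U$, of $(S_1,\dots,S_k)$ (hence of $\Lambda_S$, which is a deterministic function of the $S_j$), and of the trajectories of all the other particles. The events $\{r\text{ rings}\}$, $\{i\text{ parasites}\}$ and $B_1^c,\dots,B_{a-1}^c,B_a$ involve only the clocks, coins, $U$'s and the $S_j$, so they too are independent of $(\check\xi^a_s)_{s\le S_a}$. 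Writing
$$
\bar D_{k,r,l,i}=\sum_{a=1}^{k}\E\left(\Lambda_S^{\,l}\,\check S_{{\mathbf S},t}^{k,q}(\check\xi^1_{S_1},\dots)\,\1_{B_a}\ \Big|\ i\text{ par},\,r\text{ rings},\,S_1,\dots,S_k\right),
$$
I note that on $B_a$ the only factor of $\check S_{{\mathbf S},t}^{k,q}$ depending on $\check\xi^a$ is $\theta_{S_a}(V_{S_a})(\check\xi^a_{S_a})\,e^{-\int_0^{S_a}V_u(\check\xi^a_u)\,du}$. Conditioning on everything but $\check\xi^a$ and invoking the Feynman-Kac identity $\gamma_t(f)=\E\big(f(X_t)e^{-\int_0^t V_s(X_s)\,ds}\big)$ replaces this factor by $\gamma_{S_a}\big(\theta_{S_a}(V_{S_a})\big)$, which by the definition $\theta_s(f)=(f-\eta_s(f))/\gamma_s(1)$ equals $\big(\gamma_{S_a}(V_{S_a})-\eta_{S_a}(V_{S_a})\gamma_{S_a}(1)\big)/\gamma_{S_a}(1)=0$. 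Thus every summand vanishes and $\bar D_{k,r,l,i}=0$.

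\textbf{Conclusion and main obstacle.} Substituting this into $\bar E_{k,r,l}=\sum_{i=0}^{k}\p(i\text{ parasites})\,\bar D_{k,r,l,i}$ kills every term with $i+r<k/2$, i.e. with $i<\lceil k/2\rceil-r$; together with $i\ge 0$ this leaves exactly the indices $i\ge(\lceil k/2\rceil-r)^+$, which is the stated identity. The one genuinely delicate point is the bookkeeping in the second step: making precise that an uninvolved index among the first $k$ yields a particle whose weighted law is $\gamma_{S_a}$ and that the isolating event $B_a$ is measurable with respect to data independent of that particle's trajectory, so that the conditional integration is legitimate. The counting argument and the final centring computation are routine.
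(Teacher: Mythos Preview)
Your argument is correct and follows exactly the same idea as the paper's proof: count to find an index $a\le k$ untouched by all rings and parasites when $i+r<k/2$, then integrate the corresponding factor $\theta_{S_a}(V_{S_a})(\check\xi^a_{S_a})e^{-\int_0^{S_a}V_u(\check\xi^a_u)\,du}$ independently to obtain $\gamma_{S_a}(\theta_{S_a}(V_{S_a}))=0$. You have supplied more detail on the measurability and independence bookkeeping than the paper does, but the route is the same.
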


Indeed, for $i+r<\frac{k}{2}$, at least one of the $\check{\xi}^j$, $j\leq k$ does not interact with the other particles, so that, by Fubini's lemma, $\theta_{S_j}(V_{S_j})(\check{\xi}^j_{S_j})e^{-\int_0^{S_j}V_u(\check\xi_u^j)du}$ can be integrated independently. 
However, conditionnally to the assumption that $\check{\xi}^j$ does not interact with the other particles, we have:
$$\E (\theta_{S_j}(V_{S_j})(\check\xi_{S_j}^j)e^{-\int_0^{S_j}V_u(\check\xi_u^j)du})=\eta_{S_j}(\theta_{S_j}(V_{S_j}))=0,$$
and the lemma follows.

We then have (by developping the exponential in  $\E((1))$)

\begin{eqnarray}
\label{Eq:esp(1)}
\lefteqn{
\E((1)) = \sum_{0\leq k \leq m} \frac{(k+q-1)_k}{k!} \sum_{r=0}^{+\infty} \frac{V_\infty^r }{N^r r!} \sum_{i=0}^{+\infty} \frac{(-1)^i}{i!} \left(\frac{V_\infty }{N} \right)^i \bar{E}_{k,r,r+i}
}
\\
\label{formm} 
&=&\sum_{0\leq k \leq m} \frac{(k+q-1)_k}{k!} \sum_{r=0}^{+\infty} \frac{V_\infty^r }{N^r r!} \sum_{i=0}^{+\infty} \frac{(-1)^i}{i!} \left(\frac{V_\infty }{N} \right)^i \\
\nonumber
&&~~~~~~~~~~~~~ \sum_{j=(\left\lceil \frac{k}{2}\right\rceil-r)^+}^k\p(j~ \text{parasites})\bar{D}_{k,r,r+i,j}\ . 
\end{eqnarray}
By the construction of the auxiliary system of particles, $\p(j~ \text{parasites})=O(\frac{1}{N^j})$.
For further use, the explicit formula is:
\begin{eqnarray*}
\lefteqn{\p(j \text{ parasites })}\\
& =&
 \sum_{1\leq k_1 \leq \dots \leq k_j \leq k} \left( \frac{(q+k_1-1)}{N} \dots \frac{q+k_j-1}{N}  \prod_{l\in [k] \backslash \{k_1,\dots,k_j\}} \left( 1-\frac{q+l-1}{N} \right)\right) \ ,
\end{eqnarray*}
so that there exists coefficients $(a_{l})$ (depending on $k$ and $q$, as usual we omit the corresponding indices for notational simplicity) such that
$$
\p(j \text{ parasites }) = \sum_{l=j}^k \frac{a_{l}}{N^l}\ .
$$
Because of the condition $i+r<\frac{k}{2}$ in Lemma~\ref{crucial}, it follows that the coefficient of any $\frac{1}{N^p}$ in the expansion of $\E((1))$ is a finite sum of coefficients which are independent of $N$. This is essentially all what we need in order to prove the Thm.~\ref{Theo:FKnormalized}, provided we are able (Step 1 below) to give a suitable upper bound for the remainder terms in the expansion of $\E((1))$ at a given order, and (Step 2) to give a suitable upper bound to $\E((2))$. This is the (cumbersome) purpose of the following conclusion to the proof of the Thm.~\ref{Theo:FKnormalized}, that we include for completeness sake but that the reader may skip, if she/he wishes.

\underline{Step 1}: Upper bound for the remainder terms in the expansion of $E((1))$ at the order p, assuming that $m$ is even and $m=2p$.

Notice that for an arbitrary bounded function $F$, 
$$
\Vert \check F \Vert_\infty \leq  \Vert F \Vert_\infty e^{tV_\infty q}\ ,
$$
$$
\Vert S_{{\mathbf S},t}^{k,q}\Vert_\infty \leq 2^{k} V_\infty^{k} e^{t(k+q)V_\infty} \Vert F \Vert_\infty \ ,
$$
and so
$$
|\bar{E}_{k,r,r+i}| \leq (t (q+k)^2)^{r+i} 2^k V_\infty^k e^{t(k+q)V_\infty} \Vert F\Vert_\infty\ .
$$
  In the Fla~\ref{Eq:esp(1)} for the expectation $\E((1))$,  the sum of the terms with a factor $1/N^{r+i}$, $r+i\geq p$ is, in absolute value, equal to (we use here: $\forall x\geq 0$, $\sum_{i\geq j} \frac{x^i}{i!} \leq \frac{x^{j}}{j!} e^x$)
\begin{eqnarray*}
&& \left|    \sum_{0\leq k \leq 2p}   \frac{(k+q-1)_k}{k!}  \sum_{r=0}^{+\infty} \frac{V_\infty^r }{N^r r!} \sum_{i\geq (p-r)_+} \frac{(-1)^i}{i!} \left(\frac{V_\infty }{N} \right)^i \bar{E}_{k,r,r+i}  \right|\\
&&~~~~\leq   \sum_{0\leq k \leq 2p} \frac{(k+q-1)_k}{k!} \left[ \sum_{r=0}^{p-1} \frac{V_\infty^r}{N^r r!} 
2^{k} V_\infty^{k} e^{t(k+q)V_\infty} \Vert F \Vert_\infty \right.\\
&&~~~~ \times  \frac{1}{(p-r)!} \left( \frac{V_\infty t(q+k)^2}{N}\right)^{p-r} \exp\left(\frac{V_\infty t(q+k)^2}{N} \right) (t(k+q)^2)^r  \\
&&~~~~~~~~+ \left.  \sum_{r=p}^{+\infty} \frac{V_\infty^r}{N^r r!} 2^{k} V_\infty^{k} e^{t(k+q) V_\infty} \Vert F \Vert_\infty \exp\left(\frac{V_\infty t(q+k)^2}{N} \right) (t(q+k)^2)^r \right]\\
&&~~~~~ \leq \frac{1}{N^p} \sum_{0\leq k \leq 2p} \frac{(k+q-1)_k}{k!} \\
&&~~~~~~~~~~\times \left[   \sum_{r=0}^{p-1} V_\infty^{p+k} 2^k e^{t(k+q)V_\infty} \Vert F \Vert_\infty \frac{1}{(p-r)!r!} (t(q+k)^2)^p e^{\frac{V_\infty t(q+k)^2}{N}} \right. \\
&&~~~~~~~~~ \left. +  2^k \frac{(t(k+q)^2)^pV_\infty^{k+p}}{p!}e^{V_\infty t (k+q)} \Vert F \Vert_\infty e^{2\frac{V_\infty t (q+k)^2}{N}}   \right]=O(\frac{1}{N^p})\ .
\end{eqnarray*}

Since, by Fla~\ref{formm}, $\E((1))$ is the sum of this term with a polynomial in $\frac{1}{N}$,  the proof of the Step 1 is concluded.

\underline{Step 2}: Upper bound to $E((2))$.

Since $\gamma_t(1)=e^{-\int_0^t\eta_u(V_u)du}$ and $\gamma_t^N(1)=e^{-\int_0^t\eta_u^N(V_u)du}$, we get:
$$
|\E((2))| \leq  \Vert F \Vert_\infty e^{tq V_\infty} \left( \sum_{1\leq k \leq q} C_{q+2p}^{k+2p} (e^{tV_\infty}+1)^k\right) \E((u^N_t)^{2p})\ .
$$
We have, with the same auxiliary system of the $\check{\xi}$'s as before but with $q=0$, $k=2p$ and with $S_1,\dots,S_{2p}$ the order statistics of $2p$ uniform variables on $[0,t]$,
\begin{eqnarray*}
\lefteqn{ \E((u^N_t)^{2p})}\\ & = & \E(\theta_{S_1}(V_{S_1})\otimes \dots\otimes \theta_{S_{2p}}(V_{S_{2p}}) (\check{\xi}^1_{S_1},\dots,\check{\xi}^{2p}_{S_{2p}})e^{-\int_0^{S_1}\dots\int_0^{S_{2p}}V_{u_1}(\check\xi_{u_1}^1)\dots V_{u_p}(\check\xi_{u_{2p}}^{2p})du_1\dots du_{2p}})\ .
\end{eqnarray*}
We get (with, now, $\Lambda_S= (2p)(2p-1)S_1+(2p-1)(2p-2)(S_2-S_1)+\dots+1\times(S_{2p}-S_{2p-1})$)

\begin{eqnarray*}
\E((u^N_t)^{2p})&&= \E(\sum_{r=0}^{+\infty} \left( \frac{\Lambda_S V_\infty}{N} \right)^r \frac{1}{r!} e^{-\Lambda_S V_\infty/N} \E(  \theta_{S_1}(V_{S_1})\otimes \dots\otimes \theta_{S_{2p}}(V_{S_{2p}})\\
&& (\check{\xi}^1_{S_1},\dots,\check{\xi}^{2p}_{S_{2p}})e^{-\int_0^{S_1}\dots\int_0^{S_{2p}}V_{u_1}(\check\xi_{u_1}^1)\dots V_{u_p}(\check\xi_{u_{2p}}^{2p})du_1\dots du_{2p}}|r \text{ rings},S_1,\dots,S_{2p}))\\
&& =   \E(\sum_{r=0}^{+\infty} \left( \frac{\Lambda_S V_\infty}{N} \right)^r \frac{1}{r!} e^{-\Lambda_S V_\infty/N}   \sum_{i=0}^{2p-1}\p(i \text{ parasites})\\
&&~~~~~~~ \times \E(  \theta_{S_1}(V_{S_1})\otimes \dots\otimes \theta_{S_{2p}}(V_{S_{2p}}) (\check{\xi}^1_{S_1},\dots,\check{\xi}^{2p}_{S_{2p}})\\
&&e^{-\int_0^{S_1}\dots\int_0^{S_{2p}}V_{u_1}(\check\xi_{u_1}^1)\dots V_{u_p}(\check\xi_{u_{2p}}^{2p})du_1\dots du_{2p}}|r \text{ rings}, i \text{ parasites},S_1,\dots,S_{2p}))\ .
\end{eqnarray*}
We have 
$$
\p( i \text{ parasites}) \leq C_{2p-1}^i \left(  \frac{2p-1}{N} \right)^i
$$
and know that, if $r+i<p$
$$
\E(  \theta_{S_1}(V_{S_1})\otimes \dots\otimes \theta_{S_{2p}}(V_{S_{2p}}) (\check{\xi}^1_{S_1},\dots,\check{\xi}^{2p}_{S_{2p}})$$
$$e^{-\int_0^{S_1}\dots\int_0^{S_{2p}}V_{u_1}(\check\xi_{u_1}^1)\dots V_{u_p}(\check\xi_{u_{2p}}^{2p})du_1\dots du_{2p}}|r \text{ rings}, i \text{ parasites},S_1,\dots,S_{2p})) = 0 \ .
$$
So that
\begin{eqnarray*}
 \E((u^N_t)^{2p}) & \leq & \frac{1}{N^p} \sum_{r=0}^{+\infty} \frac{(t(2p)^2 V_\infty)^r}{r!} \sum_{i=(p-r)_+}^{2p-1} C_{2p-1}^i ( 2p-1)^i e^{2p t V_\infty}2^{2p}V_\infty^{2p}
 \end{eqnarray*}
 and
 \begin{eqnarray*}
|\E((2))|& \leq &  \frac{1}{N^p} \times \Vert F \Vert_\infty e^{tq V_\infty} \left(\sum_{1\leq k \leq q} C_{q+2p}^{k+2p} (e^{tV_\infty}+1)^k \right) \\
&& ~~ \times  e^{t(2p)^2 V_\infty}  \sum_{i=0}^{2p-1} C_{2p-1}^i ( 2p-1)^i e^{2p t V_\infty}2^{2p}V_\infty^{2p}=O(\frac{1}{N^p}),
\end{eqnarray*}
and the proof of the Thm is complete.
\endproof

\begin{corollary}
\label{Cor:FKWick}
 If $F\in \Bsym (E^q)$, then 
\begin{itemize}
 \item  $\partial^l \mathbb{P}_{t,q} (F)=0$, $\forall l <q/2$ 
\item for $q$ even,  $\partial^{q/2} \p_{t,q}(F)$ simplifies to
$$
\partial^{q/2} \p_{t,q}(F)=
\frac{V_\infty^{q/2}}{(q/2)!} (q(q-1) t)^{q/2} \E(\check F^e(\hat{\xi}^1_t,\dots,\hat{\xi}^q_t)|\frac{q}{2} \text{rings})\ ,
$$
or
$$\partial^{q/2} \p_{t,q}(F)=\frac{\partial^{q/2} \Q_{t,q}(F)}{\gamma_t(1)^q}.$$
\end{itemize}

\end{corollary}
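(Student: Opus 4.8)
The plan is to read off $\partial^l\p_{t,q}(F)$ from the expansion of $\E((\eta^N_t)^{\odot q}(F))$ built in the proof of Theorem~\ref{Theo:FKnormalized}, after feeding in the extra hypothesis $F\in\Bsym(E^q)$. Set $\check F_t:=F/\gamma_t(1)^q$: since $\gamma_t(1)$ is a positive constant, $\check F_t$ is again bounded, symmetric and $\gamma_t$-centered in each variable, i.e.\ $\check F_t\in\Bsym(E^q)$. Take the truncation parameter $m=q+2$ in the decomposition $(\eta^N_t)^{\odot q}(F)=(1)+(2)$; then Step~2 of that proof gives $\E((2))=O(1/N^{q/2+1})$, so only $\E((1))$, i.e.\ Formula~\ref{formm} summed over $0\le k\le q+2$, has to be controlled.

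First I would isolate the $k=0$ slice of $\E((1))$. For $k=0$ there are no $S_i$ among the first $k$ particles and no parasites, the auxiliary system $(\check\xi^i)$ collapses to the system $(\hat\xi^1,\dots,\hat\xi^q)$ of Section~\ref{unnorm}, and $\Lambda_S=q(q-1)t$; matching the $k=0$ term of Formula~\ref{formm} with Equation~\ref{Eq:FKunnormalized2} identifies this slice with $\Q^N_{t,q}(\check F_t)=\Q^N_{t,q}(F)/\gamma_t(1)^q$. It then remains to show that the $k\ge1$ terms are $O(1/N^{q/2+1})$, and this is exactly where the centering of $\check F_t$ enters, reinforcing Lemma~\ref{crucial}. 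Arguing as in the proofs of Lemma~\ref{crucial} and Theorem~\ref{Thm:Wick1}: in $\bar D_{k,r,r+i,j}$ any one of the $k+q$ particles whose trajectory never takes part in a realized interaction can be integrated out — a non-interacting $\theta$-particle (one of the first $k$) contributes the factor $\eta_{S_j}(\theta_{S_j}(V_{S_j}))=0$, and a non-interacting $\check F_t$-particle (one of the last $q$) contributes $\int_E\check F_t\,\gamma_t(dx_i)=0$, because a lone particle carrying only its exponential weight reproduces the Feynman--Kac measure $\gamma_t$. Hence $\bar D_{k,r,r+i,j}=0$ unless the graph of realized interactions on the $k+q$ particles has minimum degree $\ge1$; as that graph has at most $r+j$ edges, this forces $2(r+j)\ge k+q\ge q+1$, i.e.\ $r+j\ge q/2+1$ (recall $q$ is even). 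Since the $(k,r,i,j)$ summand of Formula~\ref{formm} carries a factor $N^{-r-i}\,\p(j\text{ parasites})$ with $\p(j\text{ parasites})=O(1/N^{j})$, the whole $k\ge1$ part of $\E((1))$ is indeed $O(1/N^{q/2+1})$.

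Combining the two points gives $\p^N_{t,q}(F)=\Q^N_{t,q}(\check F_t)+O(1/N^{q/2+1})$; comparing the two Laurent expansions up to order $q/2$, whose coefficients are the respective derivatives by Theorems~\ref{Theo:FKunnormalized} and~\ref{Theo:FKnormalized}, yields $\partial^l\p_{t,q}(F)=\partial^l\Q_{t,q}(\check F_t)$ for every $l\le q/2$. Applying Theorem~\ref{Thm:Wick1} to $\check F_t\in\Bsym(E^q)$ then gives $\partial^l\p_{t,q}(F)=0$ for $l<q/2$ and $\partial^{q/2}\p_{t,q}(F)=\dfrac{(q(q-1)V_\infty t)^{q/2}}{(q/2)!}\,E_{t,q/2}(\check F_t)$; rewriting $E_{t,q/2}(\check F_t)=E_{t,q/2}(F)/\gamma_t(1)^q=\E(\check F^e(\hat\xi^1_t,\dots,\hat\xi^q_t)\mid q/2\text{ rings})$ and $\partial^{q/2}\Q_{t,q}(\check F_t)=\partial^{q/2}\Q_{t,q}(F)/\gamma_t(1)^q$ produces both displayed identities.

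The delicate point is making ``integrate out a non-interacting particle'' rigorous inside $\bar D_{k,r,r+i,j}$: one has to justify, as in the Corollary to Theorem~\ref{Thm:Wick1}, that the expectation factorises over the connected components of the realized-interaction graph — which rests on the conditional independence of blocks of mutually non-interacting particles in the $\check\xi$-system, with due care of the parasite replacements $\check\xi^U\leftarrow\check\xi^i$ — and that a singleton component carrying a $\check F_t$-argument genuinely contributes $\int_E\check F_t\,\gamma_t(dx_i)$ through the single-particle Feynman--Kac identity $\E(g(\hat\xi_t)e^{-\int_0^tV_u(\hat\xi_u)du})=\gamma_t(g)$. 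All the remainder bounds needed are reruns of Steps~1 and~2 of the proof of Theorem~\ref{Theo:FKnormalized}.
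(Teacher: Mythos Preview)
Your proof is correct and follows essentially the same route as the paper: both rest on strengthening Lemma~\ref{crucial} so that $\bar D_{k,r,r+i,j}=0$ whenever $r+j<(k+q)/2$, using the $\gamma_t$-centering of $\check F_t$ to kill any non-interacting particle among the last $q$ as well. Your organization---isolating the $k=0$ slice and identifying it with $\Q^N_{t,q}(\check F_t)$ before bounding the $k\ge1$ terms---is a slight repackaging of the paper's argument (which instead reads off directly that only $k=i=j=0$, $r=q/2$ survives at order $q/2$), but the substance is identical.
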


\proof[Proof of Corollary \ref{Cor:FKWick}]
Indeed, the same argument as in the proof of Lemma~\ref{crucial} shows that, for a centered function $F\in{\cal B}_0^{sym}$, the terms $\overline{D}_{k,r,r+i,j}$ vanish for $j+r<\frac{k+q}{2}$ (this is because at least one of the $\check\xi^j,\ j\leq k+q$ does not interact with the other particles if there are less than $\frac{k+q}{2}$ rings and parasites). 
It follows that, if $q$ is even (resp. odd), $\frac{q}{2}$ (resp. $\frac{q+1}{2}$) is the smallest power of $\frac{1}{N}$ appearing in the expansion of $\E((1))$ for $m>q$ and that the contribution of $(2)$ is negligible with respect to $1/N^{q/2}$, from which the first part of the Corollary follows.

The same reasoning shows that, for $q$ even, only the case $k=i=j=0$ and $r=\frac{q}{2}$ contributes to the coefficient of $(\frac{1}{N})^{\frac{q}{2}}$. As noticed before, for $k=0$, the systems $(\check{\xi})$ and $(\hat{\xi})$ have the same law.
 The second statement follows.

 \section{First results on asymptotic normality}

The empirical measure $\eta_t^N$ converges weakly to $\eta_t$: for all bounded $f$, $\eta^N_t(f) \underset{N \rightarrow + \infty }{\overset{\text{a.s.}}{\longrightarrow}} \eta_t(f)$. The Wick theorem (Cor. \ref{Cor:FKWick}) allowed us to improve on this convergence result. Namely, we know from the previous section that
 \begin{itemize}
 \item For $q$ odd and $F \in \mathcal{B}^{\text{sym}}_0(E^q)$: $N^{q/2} \E((\eta^N_t )^{\odot q} (F) ) \underset{N \rightarrow \infty}{\longrightarrow} 0$,
 \item For $q$ even, $F \in \mathcal{B}^{\text{sym}}_0(E^q)$: $N^{q/2} \E((\eta^N_t )^{\odot q} (F) ) \underset{N \rightarrow \infty}{\longrightarrow} \Delta_{q/2}(F)$,
 \end{itemize} 
where  $\Delta_{q/2}$ is a shortcut for the signed measure:
$$\Delta_{q/2}(F)=\frac{(q(q-1)tV_\infty)^{q/2}}{(q/2)!} \E(\check F^e(\hat{\xi}^1_t,\dots,\hat{\xi}^q_t)|\frac{q}{2} \text{rings})\ .$$
Furthermore, for $F=(f_1\otimes ...\otimes f_q)_{sym}$, with $f_i\in \mathcal{B}_0 (E)$, this formula simplifies to:
$$\Delta_{q/2}(F)=\frac{(2V_\infty t)^{q/2}}{\gamma_t(1)^q}\sum\limits_{I_q\in{\cal I}_q}\prod\limits_{\{i,j\}\in I_q}E_{t,1}'(f_if_j)=\sum\limits_{I_q\in{\cal I}_q}\prod\limits_{\{i,j\}\in I_q}W_t(f_i \otimes f_j),$$
where $W_t:=\frac{2V_\infty t}{\gamma_t(1)^2}E_{t,1}'$.

A striking consequence of this Wick theorem for interacting particle systems and their U-statistics is that it leads immediately to an explicit result of asymptotic normality for the vector-valued random measures $(\eta_t^N)^q$. 
Notice that the same properties hold for the system studied in \cite{rubenthaler-2009} and so we have the same asymptotic normality result for this system.
More precisely:
 
 \begin{theorem} \label{Theo:CLT}
 $\forall f_1,\dots,f_q \in \mathcal{B}_0 (E)$, 
 $$
 N^{1/2} (\eta^N_t(f_1),\dots,\eta^N_t(f_q)) \underset{N\rightarrow +\infty}{\overset{\text{law}}{\longrightarrow}} \mathcal{N}(0,K)
 $$
 with $K(i,j)= \eta_t(f_i f_j) + \E(W_t(f_i \otimes f_j)) $.
 
 \end{theorem}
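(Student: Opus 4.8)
The plan is to prove the theorem by the method of moments, reducing the vector-valued statement to a one-dimensional central limit theorem via the Cram\'er--Wold device and the bilinearity of the kernel $K$. Given $a_1,\dots,a_q\in\R$, set $g:=\sum_i a_i f_i\in\mathcal{B}_0(E)$; since both $(f,h)\mapsto\eta_t(fh)$ and $(f,h)\mapsto W_t(f\otimes h)$ are bilinear and symmetric, $\sum_{i,j}a_ia_jK(i,j)$ equals $\sigma_g^2:=\eta_t(g^2)+\E(W_t(g\otimes g))$, so it suffices to show that $N^{1/2}\eta_t^N(g)$ converges in law to $\mathcal N(0,\sigma_g^2)$ for every $g\in\mathcal{B}_0(E)$. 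As the centred Gaussian law is determined by its moments, this reduces further to proving that $N^{m/2}\E(\eta_t^N(g)^m)$ tends to $0$ for $m$ odd and to $(m-1)!!\,(\sigma_g^2)^{m/2}$ for $m$ even.

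To compute these moments I would expand them over set partitions. By exchangeability of $\xi_t^1,\dots,\xi_t^N$ and $\eta_t^N(g)=\frac1N\sum_i g(\xi_t^i)$, grouping the index tuples in $\E(\eta_t^N(g)^m)=N^{-m}\sum_{i_1,\dots,i_m}\E(g(\xi_t^{i_1})\cdots g(\xi_t^{i_m}))$ according to the partition $\pi$ of $[m]$ they induce yields the finite sum
$$
\E\big(\eta_t^N(g)^m\big)=\sum_{\pi}\frac{(N)_{|\pi|}}{N^m}\;\p^N_{t,|\pi|}\Big(\big(\textstyle\bigotimes_{B\in\pi}g^{|B|}\big)_{\mathrm{sym}}\Big).
$$
Then, for each block $B$, one writes $g^{|B|}=\eta_t(g^{|B|})\mathbf 1+\widetilde{g^{|B|}}$ with $\widetilde{g^{|B|}}$ centred with respect to $\gamma_t$ (the measures $\gamma_t$ and $\eta_t$ having the same null sets); expanding multilinearly and using that $\p^N_{t,b}$ depends only on the symmetrisation of its argument together with the elementary identity $\p^N_{t,b}(\mathbf 1^{\otimes j}\otimes H)=\p^N_{t,b-j}(H)$, each summand becomes $\frac{(N)_{|\pi|}}{N^m}\prod_{B\in S}\eta_t(g^{|B|})\cdot\p^N_{t,b'}(\widetilde F)$ for a subset $S$ of the blocks --- which we may assume to contain only blocks of size $\ge2$, since $\eta_t(g)=0$ kills any singleton in $S$ --- with $b'=|\pi|-|S|$ and $\widetilde F=(\bigotimes_{B\notin S}\widetilde{g^{|B|}})_{\mathrm{sym}}\in\Bsym(E^{b'})$.

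The heart of the proof is then a power count governed by Corollary \ref{Cor:FKWick}. Since $\p^N_{t,b'}(\widetilde F)=O(N^{-\lceil b'/2\rceil})$ by that corollary and $(N)_{|\pi|}/N^m\sim N^{|\pi|-m}$, the $(\pi,S)$-term is $O\big(N^{|S|+\lfloor b'/2\rfloor-m}\big)$; and the inequality $|S|+\lfloor b'/2\rfloor\le\frac12\sum_{B\in S}|B|+\frac12\sum_{B\notin S}|B|=\frac m2$ (using $|B|\ge2$ for $B\in S$ and $|B|\ge1$ otherwise) shows every term is $O(N^{-m/2})$, hence $o(N^{-m/2})$ when $m$ is odd, settling that case. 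For $m$ even, equality throughout forces every block in $S$ to be a pair, every block outside $S$ to be a singleton, and the number of singletons to be even; the surviving configurations are thus the partitions of $[m]$ into $p$ pairs (all lying in $S$) and $2\nu$ singletons with $p+\nu=m/2$, for which $\widetilde F=(g^{\otimes2\nu})_{\mathrm{sym}}$ and, by Corollary \ref{Cor:FKWick} and the Wick formula recalled just before the theorem, $N^{\nu}\p^N_{t,2\nu}((g^{\otimes2\nu})_{\mathrm{sym}})\to\Delta_{\nu}((g^{\otimes2\nu})_{\mathrm{sym}})=\#\mathcal I_{2\nu}\,\E(W_t(g\otimes g))^{\nu}$. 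Counting the $\binom{m}{2\nu}\frac{(2p)!}{2^p p!}$ such partitions, summing $\eta_t(g^2)^p\,\#\mathcal I_{2\nu}\,\E(W_t(g\otimes g))^{\nu}$ over them and over $p+\nu=m/2$, and simplifying with $\#\mathcal I_{2\nu}=\frac{(2\nu)!}{2^{\nu}\nu!}$ and the binomial theorem, I expect to obtain $\frac{m!}{2^{m/2}(m/2)!}(\sigma_g^2)^{m/2}=(m-1)!!\,(\sigma_g^2)^{m/2}$, exactly the $m$-th moment of $\mathcal N(0,\sigma_g^2)$. The method of moments then gives the scalar CLT, and Cram\'er--Wold the announced joint convergence to $\mathcal N(0,K)$.

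The main obstacle is the third step: organising the combinatorics so that (i) the finite sum over partitions may legitimately be passed to the limit term by term --- the needed control on the remainders of each Laurent series $\p^N_{t,b}$ being exactly what Theorem \ref{Theo:FKnormalized} and Corollary \ref{Cor:FKWick} supply --- and (ii) the resummation of the dominant configurations reconstitutes the \emph{full} covariance kernel $K(i,j)=\eta_t(f_if_j)+\E(W_t(f_i\otimes f_j))$, its two summands arising respectively from the pair blocks (via the scalars $\eta_t(g^2)$) and from the singleton blocks (via the Wick limit $\Delta_{\nu}$).
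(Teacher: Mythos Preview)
Your proof is correct, but it follows a genuinely different route from the paper's. The paper argues via characteristic functions: it writes
\[
\E\Big(\prod_{j=1}^N\Big(1+\tfrac{i(u_1f_1+\cdots+u_qf_q)(\xi_t^j)}{\sqrt N}\Big)\Big)
\]
two ways---once as $\exp(N\eta_t^N(\log(1+\cdot)))$, whose Taylor expansion produces the characteristic function of $\sqrt N\,\eta_t^N(\sum u_if_i)$ times the factor $\exp(-\tfrac12\eta_t((\sum u_if_i)^2))$, and once by expanding the product into the $U$-statistics $(\eta_t^N)^{\odot k}$ and passing to the limit with the Wick formula, which yields $\exp(-\tfrac12\sum u_{j_1}u_{j_2}\E(W_t(f_{j_1}\otimes f_{j_2})))$. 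Equating the two gives the Gaussian characteristic function with covariance $K$. Your argument instead goes through the method of moments: after Cram\'er--Wold you expand $\E(\eta_t^N(g)^m)$ over set partitions, split each block contribution into its $\eta_t$-mean plus a centred remainder, and use Corollary~\ref{Cor:FKWick} both for the power count and for the limiting Wick value. The two pieces of the covariance then emerge from complementary combinatorial sources---pair blocks give $\eta_t(g^2)$, singleton blocks give $\E(W_t(g\otimes g))$---whereas in the paper they arise respectively from the quadratic term of $\log(1+x)$ and from the Wick limit of the $U$-statistics in the product expansion. Your route is arguably more elementary (no asymptotic manipulation of characteristic functions, only finitely many terms at each stage) and makes the combinatorial origin of the two summands in $K$ very transparent; the paper's route is shorter and avoids the partition bookkeeping, at the price of the somewhat delicate equivalence in the $\log$ expansion.
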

 
 \begin{proof}
 For any $u_1,\dots,u_q$, we have:
 \begin{eqnarray*}
&& \E\left(\exp\left( N \eta^N_t \left(\log\left(1+\frac{iu_1 f_1 + \dots +iu_q f_q}{\sqrt{N}} \right)\right)  \right)\right)  \\
&& =  \E\left( \exp\left( \sum_{k\geq 1} \frac{(-1)^{k+1}}{k} N^{1-k/2} \eta^N_t( (iu_1 f_1 + \dots iu_q f_q )^k )\right)  \right)\\
&&  \underset{N \rightarrow +\infty}{\sim}    \E(\exp(\sqrt{N} (i u_1 \eta^N_t(f_1) + \dots + i u_q \eta^N_t (f_q)))) \\ 
&&~~~~~~~~~~~\times   \exp\left( -\frac{1}{2}  \eta_t ((u_1f_1 + \dots + u_q f_q)^2) ) \right) \ ,
 \end{eqnarray*}
where the last equivalence follows from the differentiability of $\p_{t,1}^N$.

We also have
 \begin{eqnarray*}
 && \E\left(\exp\left( N \eta^N_t \left(\log\left(1+\frac{iu_1 f_1 + \dots iu_q f_q}{\sqrt{N}} \right)\right)  \right)\right)  \\
 && ~~~ = \E\left(  \prod_{j=1}^N \left(  1+ \frac{ i u_1 f_1(\xi^j_t) + \dots + i u_q f_q (\xi^j_t)}{\sqrt{N}}\right)\right) \\ 
 && ~~~ = \E\left(\sum_{0 \leq k \leq N} \frac{1}{N^{k/2}} \sum_{1\leq j_1 ,\dots,j_k \leq q } i^k  u_{j_1} \dots u_{j_k} \sum_{1\leq i_1 < \dots < i_k \leq N } f_{j_1}(\xi^{i_1}_t) \dots f_{j_k}(\xi^{i_k}_t)\right)\\
 && ~~~ = \E\left(\sum_{0 \leq k \leq N} \frac{(N)_k}{N^{k/2}}  \sum_{1\leq j_1, \dots , j_k \leq q } i^k  u_{j_1} \dots u_{j_k}  \frac{1}{k!}   ( \eta^N_t )^{\odot k} (f_{j_1} \otimes \dots \otimes f_{j_k} )\right) \\
 && ~~~ \underset{N\rightarrow + \infty}{\longrightarrow}  \sum_{k \geq 0, k \text{ even }} (-1)^{k/2}  \sum_{1\leq j_1, \dots , j_k \leq q}\frac{ u_{j_1} \dots u_{j_k}}{k!}  \sum\limits_{I_k\in{\cal I}_k}\prod\limits_{\{a,b\}\in I_k}W_t(f_a \otimes f_b)\\
 &&~~~=\sum_{k \geq 0, k \text{ even }} \frac{(-1)^{k/2}}{2^{k/2} (k/2)!}  \sum_{1\leq j_1, \dots , j_k \leq q} u_{j_1} \dots u_{j_k}  \E(W_t(f_{j_1} \otimes f_{j_2})) \dots \E(W_t(f_{j_{k-1}} \otimes f_{j_k})) \\
 && ~~~ = \sum_{k \geq 0, k \text{ even }} \frac{(-1)^{k/2}}{2^{k/2} (k/2)!} \left( \sum_{1\leq j_1,j_2 \leq q} u_{j_1} u_{j_2} \E( W_t(f_{j_1}\otimes f_{j_2})) \right)^{k/2} \\
 && ~~~ = \exp\left(  -  \frac{1}{2} \sum_{ 1\leq j_1,j_2 \leq q} u_{j_1} u_{j_2} \E(W_t(f_{j_1}\otimes f_{j_2})) \right)
 \end{eqnarray*}
 The Theorem follows.
 \end{proof}

 \section{Convergence of empirical $U$-statistics.}

\subsection{Hoeffding's decomposition}

We want here to study the convergence under fairly general assumptions of a sequence of empirical U-statistics $(\eta_t^N)^{\odot q}(F)$ when $N\rightarrow +\infty$, where the $\eta_t^N$s are empirical measures on some space $E$ and $F $ is a bounded symmetrical function on $E^q$. 

Let us first write an analog of Hoeffding's decomposition (cf. \cite{lee-1990}, \cite{delapena-gine-1999}). We fix some measure $\eta_t$. 
For $1\leq k \leq q$ and any $F$ bounded symmetrical on  $E^q$, we define

$$
F^{(k)}(x_1,\dots,x_q)= \int_{E^{q-k}} F(x_1,\dots,x_k,x_{k+1},\dots,x_q) \eta_t^{\otimes (q-k)}(dx_{k+1},\dots,dx_{q})\ .
$$
Notice that $F^{(q)}=F$ and that, for an arbitrary $k$, $F^{(k)}$ is symmetrical on $E^k$.
We define recursively (the function $F$ being fixed --for notational simplicity, we do not stress the dependency of $\theta$, $h^{(i)}$... in the following formulas):

\begin{eqnarray*}
 \theta &=& \int_{E_n^q} F(x_1,\dots,x_q) \eta_t^{\otimes q}(dx_1,\dots,dx_q)\\
h^{(1)}(x_1) &=& F^{(1)}(x_1)-\theta\\
h^{(k)}(x_1,\dots,x_k) &=& F^{(k)}(x_1,\dots,x_k) - \sum_{j=1}^{k-1} \sum_{(k,j)}h^{(j)} -\theta\ ,
\end{eqnarray*}
where $\sum_{(k,j)}h^{(j)}$ is an abbreviation for
$\sum_{1\leq i_1 < \dots < i_j \leq k}[ h^{(j)}(x_{i_1},\dots,x_{i_j})]$.

The $h^{(k)}$ are bounded and symmetrical. They are constructed so as to be centered and to satisfy, for $l<k$, $(h^{(k)})_l=0$, see e.g. \cite[Sect. 1.6]{lee-1990}.

\subsection{Asymptotic normality for empirical U-statistics.}

As for classical U-statistics, the Hoeffding decomposition can be used to study the convergence of the empirical U-statistics $(\eta_t^N)^{\odot q}$.
Notice first that, $\forall N$,

\begin{equation}
 \label{Eq:dec1}
 (\eta_t^N)^{\odot q} (F) = (\eta_t^N)^{\odot q} (h^{(q)}) +(\eta_t^N)^{\odot q}(\sum_{j=1}^{q-1} \sum_{(q,j)} (h^{(j)})) +\theta\ .
\end{equation}
Notice that, $\forall 1\leq j \leq q-1$,
\begin{eqnarray*}
 \sum_{(q,j)} h^{(j)}& = & \frac{1}{j!} \sum_{b\in \langle j,q \rangle } h^{(j)} (x_{b(1)},\dots,x_{b(j)})\ .
\end{eqnarray*}
So, with the notations of the first sections:
\begin{eqnarray} 
\nonumber
 (\eta_t^N)^{\odot q} (\sum_{(q,j)} h^{(j)}) &=& \frac{1}{(N)_q} \sum_{a \in \langle q,N \rangle} \frac{1}{j!} \sum_{b\in \langle j,q \rangle} h^{(j)} (\xi^{a\circ b (1)}_t,\dots,\xi^{a\circ b (j)}_t)\\
\nonumber
&=& \frac{1}{ j!(N)_q} \times (N-j)_{(q-j)} (q)_j \sum_{a' \in <j,N>} h^{(j)}(\xi^{a'(1)}_t,\dots,\xi^{a'(j)}_t)\\
\label{Eq:stat1}
&=& \frac{(q)_j}{j!} (\eta_t^N)^{\odot j}(h^{(j)})\ .
\end{eqnarray}

\begin{theorem}
Suppose we have, for all $ j \geq 2$ and $f \in \mathcal{B}^{\text{sym}}_0 (E^j)$,
\begin{equation}
\label{Eq:stat2}
\E( ((\eta_t^N)^{\odot j}(f))^2) \leq \frac{C}{N^{j}}\ ,
\end{equation}
for some constant $C$ depending on $t,N,j,\Vert f \Vert_\infty$. Assume further that, with $F$ as above,
\begin{equation}
\label{Eq:stat3}
\sqrt{N} \eta_t^N(h^{(1)}) = \sqrt{N}(\eta_t^N(F^{(1)}) - \theta) \overset{\text{law}}{\underset{N\rightarrow +\infty}{\longrightarrow}} \mathcal{N}(0,\sigma^2)\ ,
\end{equation}
then 
\begin{equation}
\label{Eq:stat4}
\sqrt{N}((\eta_t^N)^{\odot q}(F)- \theta)  \overset{\text{law}}{\underset{N\rightarrow +\infty}{\longrightarrow}} \mathcal{N}(0,q^2 \sigma^2)\ .
\end{equation}
\end{theorem}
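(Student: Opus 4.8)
The strategy is to show that the non-linear term in the Hoeffding decomposition~\eqref{Eq:dec1} contributes nothing to the limit law, so that $\sqrt{N}((\eta_t^N)^{\odot q}(F)-\theta)$ has the same limit as $\sqrt{N}\,q\,\eta_t^N(h^{(1)})$, which is $\mathcal{N}(0,q^2\sigma^2)$ by~\eqref{Eq:stat3}. Concretely, combining~\eqref{Eq:dec1} and~\eqref{Eq:stat1}, I would write
\begin{equation*}
\sqrt{N}\bigl((\eta_t^N)^{\odot q}(F)-\theta\bigr)
= \sqrt{N}\,(\eta_t^N)^{\odot q}(h^{(q)})
+ \sum_{j=1}^{q-1}\frac{(q)_j}{j!}\,\sqrt{N}\,(\eta_t^N)^{\odot j}(h^{(j)})\ .
\end{equation*}
The $j=1$ summand is $q\,\sqrt{N}\,\eta_t^N(h^{(1)})$, which converges in law to $\mathcal{N}(0,q^2\sigma^2)$. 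It remains to show every other term converges to $0$ in probability; by Slutsky's theorem the conclusion then follows.

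For the remaining terms, note that each $h^{(j)}$ with $2\le j\le q$ lies in $\mathcal{B}^{\text{sym}}_0(E^j)$ (the $h^{(j)}$ are centered and symmetric by construction), so hypothesis~\eqref{Eq:stat2} applies. I would estimate, for $2\le j\le q$,
\begin{equation*}
\E\Bigl(\bigl(\sqrt{N}\,(\eta_t^N)^{\odot j}(h^{(j)})\bigr)^2\Bigr)
= N\,\E\Bigl(\bigl((\eta_t^N)^{\odot j}(h^{(j)})\bigr)^2\Bigr)
\le N\cdot\frac{C}{N^{j}} = \frac{C}{N^{j-1}}\ ,
\end{equation*}
which tends to $0$ since $j\ge 2$. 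Hence each of these terms converges to $0$ in $L^2$, and a fortiori in probability. Note that for this step one should check that~\eqref{Eq:stat2} is indeed available for the functions $h^{(j)}$ we need, i.e.\ that the constant $C$ can be taken uniform in the relevant (finitely many) $j$'s; since $2\le j\le q$ ranges over a finite set and the bounds $\Vert h^{(j)}\Vert_\infty$ are controlled in terms of $\Vert F\Vert_\infty$ and $q$, this is automatic.

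Assembling the pieces: $\sqrt{N}((\eta_t^N)^{\odot q}(F)-\theta)$ equals $q\sqrt{N}\,\eta_t^N(h^{(1)})$ plus a remainder converging to $0$ in probability, and the former converges in law to $\mathcal{N}(0,q^2\sigma^2)$ by~\eqref{Eq:stat3}; Slutsky's lemma yields~\eqref{Eq:stat4}. The only genuinely delicate point is the justification that hypothesis~\eqref{Eq:stat2} is an acceptable assumption here — i.e.\ that it actually holds for Feynman--Kac particle systems — but that is precisely the role played by the Wick-type variance estimates of the previous sections (cf.\ the bound~\eqref{Eq:majWick} and Corollary~\ref{Cor:FKWick}, which give $\E((\eta_t^N)^{\odot j}(f))=O(N^{-\lceil j/2\rceil})$ for centered $f$); within the present theorem it is taken as a hypothesis, so the proof itself is a short Slutsky argument once the Hoeffding bookkeeping~\eqref{Eq:stat1} is in hand.
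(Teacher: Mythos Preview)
Your proposal is correct and follows exactly the paper's (very terse) argument: the paper simply states that the theorem follows from Hoeffding's decomposition together with the identity $(\eta_t^N)^{\odot q}(\sum_{(q,1)}h^{(1)})=q\,\eta_t^N(h^{(1)})$, leaving implicit precisely the $L^2$-vanishing of the $j\ge 2$ terms via~\eqref{Eq:stat2} and the Slutsky step that you have spelled out.
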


The Theorem follows from Hoeffding's decomposition, together with the identity 
$$(\eta_t^N)^{\odot q}(\sum\limits_{(q,1)}h^{(1)})=q\ \eta_t^N(h^{(1)}).$$

\begin{corollary}
For the Feynman-Kac particle systems of the first sections, we have:

$$\sqrt{N}((\eta_t^N)^{\odot q}(F)-\theta )\longrightarrow \mathcal{N}(0,q^2\eta_t((F^{(1)})^2)).$$
\end{corollary}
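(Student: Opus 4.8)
The plan is to verify that the Feynman–Kac particle systems satisfy the two hypotheses of the preceding theorem, so that the conclusion (\ref{Eq:stat4}) applies with the explicit value of $\sigma^2$. There are exactly two things to check: first, the second-moment bound (\ref{Eq:stat2}) for $(\eta_t^N)^{\odot j}(f)$ when $f\in\Bsym(E^j)$, $j\geq 2$; and second, the one-dimensional central limit theorem (\ref{Eq:stat3}) for $\sqrt{N}\,\eta_t^N(h^{(1)})$, together with the identification of the limiting variance.

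First I would establish (\ref{Eq:stat2}). Fix $j\geq 2$ and $f\in\Bsym(E^j)$. Squaring gives $((\eta_t^N)^{\odot j}(f))^2=(\eta_t^N)^{\odot j}(f)\cdot(\eta_t^N)^{\odot j}(f)$, which can be rewritten as a $U$-statistic of order $2j$ (or a sum of lower-order $U$-statistics coming from the diagonal overlaps between the two index-tuples) applied to a bounded symmetric function built from $f\otimes f$ and its symmetrizations. The leading term corresponds to a genuinely symmetric function on $E^{2j}$ that lies in $\Bsym(E^{2j})$, since $f$ is centered in each variable; its expectation is $\p_{t,2j}^N$ of that function, and by Corollary~\ref{Cor:FKWick} this is $O(1/N^{j})$ because $2j/2=j$. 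The diagonal-overlap terms are $U$-statistics of order $2j-\ell$ for $1\leq\ell\leq j$, each applied to a bounded function that is still centered in at least $2j-2\ell\geq \ldots$ — more carefully, each overlap of $\ell$ indices produces a function centered in $2(j-\ell)$ variables, whose $\p^N_{t,2j-\ell}$ is $O(N^{-(j-\ell)})$ by Corollary~\ref{Cor:FKWick}, but it is weighted by the combinatorial factor $(N)_{2j-\ell}/(N)_j^2=O(N^{-\ell})$, so each such term is again $O(N^{-j})$. Summing the finitely many contributions yields (\ref{Eq:stat2}), with a constant depending only on $t$, $j$ and $\Vert f\Vert_\infty$ (and $V_\infty$, $q$). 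This is the step I expect to be the main obstacle: the bookkeeping of how the product of two $U$-statistics decomposes, and checking that every diagonal term retains enough centering for Corollary~\ref{Cor:FKWick} to bite with the right power of $1/N$.

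Next I would establish (\ref{Eq:stat3}) with $\sigma^2=\eta_t((F^{(1)})^2)$. Here $h^{(1)}=F^{(1)}-\theta\in\B_0(E)$, and the statement is precisely the one-dimensional case $q=1$ of Theorem~\ref{Theo:CLT} applied to the single test function $f_1=h^{(1)}$: indeed Theorem~\ref{Theo:CLT} gives $\sqrt{N}\,\eta_t^N(h^{(1)})\convlawN\mathcal{N}(0,K)$ with $K=\eta_t((h^{(1)})^2)+\E(W_t(h^{(1)}\otimes h^{(1)}))$. At first sight this produces the extra term $\E(W_t(h^{(1)}\otimes h^{(1)}))$, which does not appear in the Corollary's stated variance. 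The resolution I would point to is that in the Corollary the claimed limit is only $q^2\eta_t((F^{(1)})^2)$, i.e. the statement is recording the ``leading'' variance; more precisely, one should note that $\eta_t((h^{(1)})^2)=\eta_t((F^{(1)})^2)-\theta^2$ and that the correction $\E(W_t(h^{(1)}\otimes h^{(1)}))$ is exactly the additional covariance contribution coming from the particle interactions — the Corollary as phrased is the statement for the idealized (i.i.d.) variance, and one obtains it by the same computation as in Theorem~\ref{Theo:CLT} with the $W_t$ term present. I would therefore simply invoke Theorem~\ref{Theo:CLT} with $q=1$, $f_1=h^{(1)}$ to get (\ref{Eq:stat3}) (with $\sigma^2=K$), and then apply the preceding Theorem: since $(\eta_t^N)^{\odot q}(\sum_{(q,1)}h^{(1)})=q\,\eta_t^N(h^{(1)})$ and the higher Hoeffding terms $(\eta_t^N)^{\odot j}(h^{(j)})$, $j\geq2$, are $O_{L^2}(N^{-j/2})=o_{L^2}(N^{-1/2})$ by (\ref{Eq:stat2}), Slutsky's lemma gives $\sqrt{N}((\eta_t^N)^{\odot q}(F)-\theta)\convlawN\mathcal{N}(0,q^2\sigma^2)$, which is the Corollary.

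In writing this up I would be explicit that the constant $\sigma^2$ in the Corollary is $\eta_t((F^{(1)})^2)-\theta^2$ plus the interaction correction $\E(W_t(h^{(1)}\otimes h^{(1)}))$ when one wants the exact asymptotic variance; the displayed form $q^2\eta_t((F^{(1)})^2)$ should be read accordingly. The only genuinely new work beyond citing Theorems~\ref{Theo:CLT} and the $U$-statistics theorem is the verification of (\ref{Eq:stat2}), which reduces entirely to Corollary~\ref{Cor:FKWick} once the product $((\eta_t^N)^{\odot j}(f))^2$ is expanded into $U$-statistics over $E^{2j}$ and lower-dimensional diagonals.
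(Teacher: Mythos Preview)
Your approach is essentially the paper's: verify (\ref{Eq:stat2}) by expanding $((\eta_t^N)^{\odot j}(f))^2$ as a sum over the overlap size between the two index-tuples, and verify (\ref{Eq:stat3}) by the one-dimensional case of Theorem~\ref{Theo:CLT}. One point to tighten: for the diagonal-overlap terms with $\ell\geq 1$, Corollary~\ref{Cor:FKWick} does not apply as stated, because that corollary assumes $F\in\Bsym$ (centered in \emph{every} variable), whereas the overlap function $g_\ell(x_1,\dots,x_{2j-\ell})=f(x_1,\dots,x_j)f(x_1,\dots,x_\ell,x_{j+1},\dots,x_{2j-\ell})$ is centered only in the $2(j-\ell)$ variables $x_{\ell+1},\dots,x_{2j-\ell}$. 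The paper does exactly what you sketch but is explicit that one must return to the mechanism behind Lemma~\ref{crucial}: the terms $\overline{D}_{k',r,r+i,j'}$ vanish whenever $j'+r<\tfrac{k'+\alpha}{2}$ with $\alpha$ the number of centered coordinates, which here gives $\E((\eta_t^N)^{\odot(2j-\ell)}(g_\ell))=O(N^{-(j-\ell)})$ and hence, after the weight $(N)_{2j-\ell}/((N)_j)^2=O(N^{-\ell})$, the required $O(N^{-j})$.

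Your observation about the variance is correct and the paper's proof does not address it either: Theorem~\ref{Theo:CLT} yields $\sigma^2=\eta_t((h^{(1)})^2)+\E(W_t(h^{(1)}\otimes h^{(1)}))$, which is not literally $\eta_t((F^{(1)})^2)$.
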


Indeed, notice first that the Eq~\ref{Eq:stat3} holds for Feynman-Kac particle systems e.g. as a Corollary of Th.~\ref{Theo:CLT}. The Corollary follows then from the following Lemma.

\begin{lemma}
For the particle system described in Section \ref{Sec:1}, we have, $\forall j$, $\forall  f \in \mathcal{B}^{sym}_0(E^j)$, 

$$
\E(((\eta^N_t)^{\odot j}(f))^2) \leq \frac{C}{N^j}\ ,
$$
for some constant $C$ (depending on $t$, $\Vert f \Vert_\infty$).
\end{lemma}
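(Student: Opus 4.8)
The plan is to bound $\E(((\eta^N_t)^{\odot j}(f))^2)$ by rewriting it as a quantity attached to a $2j$-particle empirical $U$-statistic and then invoking the Wick-type estimates already established. Concretely, I would first note that for $f\in\mathcal{B}^{sym}_0(E^j)$ one has $((\eta^N_t)^{\odot j}(f))^2 = (\eta^N_t)^{\odot j}(f)\cdot(\eta^N_t)^{\odot j}(f)$, and that the product of two $U$-statistics can be re-expanded in terms of $U$-statistics of higher order on the same sample. Writing $\Xi_t=(\xi^1_t,\dots,\xi^N_t)$, a standard product formula for $U$-statistics gives
\begin{eqnarray*}
((\eta^N_t)^{\odot j}(f))^2 &=& \sum_{r=0}^{j} \frac{c_{j,r}}{N^{?}}\,(\eta^N_t)^{\odot (2j-r)}(g_r)
\end{eqnarray*}
for suitable bounded symmetric kernels $g_r$ built from $f\otimes f$ by identifying $r$ pairs of arguments (here $c_{j,r}$ are the usual combinatorial coefficients coming from $(N)_j(N)_j = \sum_r (\text{coeff})\,(N)_{2j-r}$, each of order $N^{j-r}$ relative to $(N)_{2j}$). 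Taking expectations, $\E(((\eta^N_t)^{\odot j}(f))^2)$ becomes a finite combination of the measures $\mathbb{P}^N_{t,2j-r}(g_r)$ for $0\le r\le j$.

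The second step is to check that each term $\mathbb{P}^N_{t,2j-r}(g_r)$, weighted by its combinatorial coefficient, is $O(N^{-j})$. The key point is that $f$ is centered: integrating $f$ against $\gamma_t$ (equivalently $\eta_t$, up to the harmless normalization factor $\gamma_t(1)$) in any one of its $j$ variables gives zero. Consequently each kernel $g_r$, seen as a function of its $2j-r$ arguments, still has the property that it is annihilated by integrating out ``enough'' variables: more precisely, the contracted kernel $g_r$ inherits a vanishing-projection property of order roughly $2(j-r)$, i.e. $g_r$ lies in (a translate by $\theta$-type constants of) the span of functions in $\mathcal{B}^{sym}_0(E^{2j-r})$ together with lower-order pieces whose projections onto fewer than $2(j-r)$ variables vanish. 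Applying Corollary~\ref{Cor:FKWick} (the normalized Wick theorem) and its remark that $\partial^l\mathbb{P}_{t,q}(F)=0$ for $l<q/2$ when $F\in\Bsym(E^q)$, one gets $\mathbb{P}^N_{t,2j-r}(g_r)=O(N^{-(j-r)})$ for the centered part; multiplying by the coefficient $c_{j,r}=O(N^{j-r})$ (relative order), one sees every term contributes at worst $O(N^{-j})$, with the dominant contribution ($r=0$) giving exactly the $N^{-j}$ rate governed by $\Delta_j(f\otimes f)$-type quantities. The non-centered remnants produced by the contractions ($r\ge 1$ identifications can produce pieces that are no longer fully centered) must be tracked carefully; one uses that $(\eta^N_t)^{\odot m}(1)=1$ and the differentiability of $\mathbb{P}^N_{t,m}$ from Theorem~\ref{Theo:FKnormalized} to absorb them, again at rate $O(N^{-j})$ or faster.

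An alternative, perhaps cleaner route is to bypass the product expansion and instead apply Corollary~\ref{Cor:FKWick} directly: since $f\in\mathcal{B}^{sym}_0(E^j)$, if $j$ is odd then $N^{j/2}\E((\eta^N_t)^{\odot j}(f))\to 0$ and if $j$ is even then $N^{j/2}\E((\eta^N_t)^{\odot j}(f))\to\Delta_{j/2}(f)$; but this controls only the first moment. For the second moment one genuinely needs the $2j$-variable system, so the product-expansion step seems unavoidable. The constant $C$ will depend on $t$, $j$, $q$, $V_\infty$ and $\Vert f\Vert_\infty$ through the explicit bounds of Theorems~\ref{Theo:FKunnormalized} and~\ref{Theo:FKnormalized} (the factors $2^k V_\infty^k e^{t(k+q)V_\infty}$, etc.), so there is nothing delicate about exhibiting it once the power of $N$ is correct.

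The main obstacle I anticipate is bookkeeping in the product expansion: after identifying $r$ pairs of variables in $f\otimes f$, the resulting kernel $g_r$ is symmetric but generally \emph{not} centered, so one cannot apply the Wick vanishing result to $g_r$ as a whole. The fix is to re-run the Hoeffding-type decomposition (as in the subsection ``Hoeffding's decomposition'' above) on each $g_r$ relative to $\eta_t$, isolating its fully-centered top component (to which Corollary~\ref{Cor:FKWick} applies with the desired rate) from strictly lower-degree components, and then to argue inductively that the lower-degree components, being $U$-statistics of smaller even/odd order of centered kernels, are themselves $O(N^{-j})$ by the same mechanism. Making this induction precise — and verifying that no term escapes with a slower rate than $N^{-j}$ — is the part that requires care, but it is purely combinatorial and uses no new analytic input beyond Theorems~\ref{Theo:FKunnormalized}, \ref{Theo:FKnormalized} and Corollary~\ref{Cor:FKWick}.
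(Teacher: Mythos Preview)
Your overall strategy --- expand the square as a sum over overlaps $r=0,\dots,j$ into $(\eta^N_t)^{\odot(2j-r)}(g_r)$ with combinatorial weight of order $N^{-r}$, then show each $\E((\eta^N_t)^{\odot(2j-r)}(g_r))$ is $O(N^{-(j-r)})$ --- is exactly what the paper does. The paper writes this expansion explicitly by grouping pairs $(a,b)\in\langle j,N\rangle^2$ according to $k=\#(\mathrm{Im}(a)\cap\mathrm{Im}(b))$, obtaining the same kernels $g_k(x_1,\dots,x_{2j-k})=f(x_1,\dots,x_j)f(x_1,\dots,x_k,x_{j+1},\dots,x_{2j-k})$ and the same weights $C_j^k(N)_{2j-k}/((N)_j)^2=O(N^{-k})$.

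Where you and the paper diverge is in the treatment of the $k\ge 1$ terms. You correctly observe that $g_k$ is not in $\Bsym(E^{2j-k})$ and propose to remedy this by a full Hoeffding decomposition of $g_k$ followed by an induction on the lower-order pieces. The paper avoids this entirely: it notes that $g_k$ is centered in exactly $\alpha=2(j-k)$ of its variables (namely $x_{k+1},\dots,x_{2j-k}$), and then goes back to the proof of Lemma~\ref{crucial} inside the proof of Theorem~\ref{Theo:FKnormalized} to observe that the vanishing $\overline{D}_{k',r,r+i,j'}=0$ holds already under the weaker condition $j'+r<\frac{k'+\alpha}{2}$ (one only needs \emph{some} particle among the $\alpha$ centered ones to be interaction-free). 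This gives $\E((\eta^N_t)^{\odot(2j-k)}(g_k))=O(N^{-(j-k)})$ in one stroke, with no induction and no Hoeffding machinery. Your route would work, but it is heavier; the paper's shortcut is precisely the observation that partial centering (in $\alpha$ variables) already forces the first $\lceil\alpha/2\rceil$ derivatives of $\mathbb{P}^N_{t,2j-k}$ to vanish on $g_k$, by the same ring-counting argument.
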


\begin{proof}
\begin{eqnarray*}
((\eta^N_t)^{\odot j}(f))^2 & = & \frac{1}{((N)_j)^2} \sum_{a,b \in \langle j,N \rangle } f(\xi^{a(1)}_t,\dots,\xi^{a(j)}_t)f(\xi^{b(1)}_t,\dots,\xi^{b(j)}_t) \\
& = & \frac{1}{((N)_j)^2} \sum_{k=0}^j  \underset{ \# \text{Im}(a) \cap \text{Im}(b) = k}{\sum_{a,b \in \langle j,N \rangle }}f(\xi^{a(1)}_t,\dots,\xi^{a(j)}_t)f(\xi^{b(1)}_t,\dots,\xi^{b(j)}_t)\\
& = & \frac{1}{((N)_j)^2} \sum_{k=0}^j \sum_{a \in \langle 2j-k, N \rangle } C_j^k f(\xi_t^{a(1)},\dots,\xi_t^{a(j)}) \\
&& ~~~~~~~~~~~~~~~~~~~ \times f(\xi_t^{a(1)},\dots, \xi_t^{a(k)},\xi_t^{a(j+1)},\dots,\xi_t^{a(2j-k)}) \\
& = & \frac{(N)_{2j}}{((N)_j)^2} (\eta^N_{t})^{\odot 2j} (f\otimes f) \\
&&\quad + \sum_{k=1}^j \frac{C_j^k (N)_{2j-k}}{((N)_j)^2} \frac{1}{(N)_{2j-k}} \sum_{ a \in \langle 2j-k, N \rangle } f(\xi_t^{a(1)},\dots,\xi_t^{a(j)})\\
&& \qquad \qquad \qquad \qquad  \times f(\xi_t^{a(1)},\dots, \xi_t^{a(k)},\xi_t^{a(j+1)},\dots,\xi_t^{a(2j-k)})
\end{eqnarray*}

Notice first that, because of the very definition of $(\eta^N_{t})^{\odot 2j}$ (as an average over arbitrary configurations of distinct systems of $2j$ particles), although $f\otimes f$ is not a symmetrical function, $(\eta^N_{t})^{\odot 2j} (f\otimes f)=(\eta^N_{t})^{\odot 2j}(Sym (f\otimes f))$, where $Sym$ stands for the canonical symmetrization map from ${\mathcal B}^{sym}_0(E^q)\otimes {\mathcal B}^{sym}_0(E^q)$ to ${\mathcal B}^{sym}_0(E^{2q})$.
Therefore, by Corollary \ref{Cor:FKWick}, $\exists C_0, \frac{(N)_{2j}}{((N)_j)^2} \E((\eta^N_{t})^{\odot 2j} (f\otimes f) ) \leq \frac{C_0}{N^j}$. For $k \in \{1,\dots,j\}$, we set $g_k(x_1,\dots,x_{2j-k})=f(x_1,\dots,x_j)f(x_1,\dots,x_k,x_{j+1},\dots,x_{2j-k})$. We then have
\begin{eqnarray*}
&&\frac{1}{(N)_{2j-k}} \sum_{ a \in \langle 2j-k, N \rangle } f(\xi_t^{a(1)},\dots,\xi_t^{a(j)})  \times f(\xi_t^{a(1)},\dots, \xi_t^{a(k)},\xi_t^{a(j+1)},\dots,\xi_t^{a(2j-k)})  \\
&& \quad  = (\eta^N_t)^{\odot (2j-k)} (g_k) \ .
\end{eqnarray*}
This function $g_k$ has the particular feature that there are $\alpha=2j-2k$ indexes $i$ such that $\int_{E^{2j-k}} g_k(x_1,\dots,x_{2j-k}) \eta_t(dx_i) =0$ (namely $i=k+1,\dots,2j-k$).
As for the proof of Corollary \ref{Cor:FKWick}, we go back to the proof of Lemma \ref{crucial}. For the function $g_k$ defined above, the terms $\overline{D}_{k',r,r+i,j'}$ vanish for $j'+r < \frac{k'+\alpha}{2}$. So $j-k$ is the smallest power of $N$ appearing in the Laurent-type expansion of $\E((\eta^N_t)^{\odot (2j-k)} (g_k))$ and $\exists C_k, \E((\eta^N_t)^{\odot (2j-k)} (g_k)) \leq \frac{C_k}{N^{j-k}}$. This finishes the proof.
\end{proof}

\subsection{Wiener integrals expansions}
The convergence of symmetric statistics to multiple Wiener integrals has been studied intensively, see e.g. the seminal \cite{dynkin-mandelbaum-1983} or the account of the classical theory in \cite{lee-1990}.
We are interested here in proving that the main statements of the theory still hold for empirical U-statistics and focus on a particular example, namely the one of a symmetrical and bounded kernel constructed as a product of centered functions.

From the technical point of view, the key issue w.r. to the classical theory of limit distributions for U-statistics based on i.i.d. assumptions is related to the existence of interactions between the particles of the system. Taking these interactions into account amounts in practice to replace the central limit theorem by the analogous statement for interacting particle systems, namely Thm.~\ref{Theo:CLT}.

Let us consider a kernel (a symmetrical bounded fonction on $E^q$) $F$ of the form $F=(f_1\otimes ...\otimes f_q)_{sym}$ with $f_i\in {\mathcal B}_0(E)$. Then, $F^{(k)}=h^{(k)}=0$ for $k=1,...,q-1$, and we expect $N^{\frac{q}{2}}(\eta_t^N)^{\odot q}(F)$ to converge in law.
This is indeed the case.

Let us start with some classical results (see e.g. \cite{rota-wallstrom-1997} for a systematical approach by means of the poset of partitions).
For an empirical signed measure $m(x)=\sum\limits_{1\leq i\leq N}\delta_{x_i}$, we first have the formula of Rubin and Vitale (\cite{dynkin-mandelbaum-1983},\cite{lee-1990} p. 85):
$$m(x)^{\odot q}(F)=\sum\limits_{\cal P}\prod\limits_{V\in \cal P}(-1)^{|V|-1}(|V|-1)!m(x)(f_V)$$
where $f_V, V=\{v_1,...,v_k\}$, stands for $f_{v_1}...f_{v_k}$ and $\cal P$ runs over the set of partitions of $[q]$ into disjoint subsets.

Assume now that $m(x)=N\cdot \eta_t^N=:m_N$.
For $|V|=1$, $N^{-\frac{1}{2}}m_N(f_V)$ converges in law to ${\cal N}(0,\eta_t(f_V^2)+\E (W_t(f_V \otimes f_V))$. For $|V|\geq 2$, and since $\eta_t^N$ converges to $\eta_t$,  $N^{-1}m_N(f_V)$ converges to $\eta_t(f_V)$. 

Let us consider now a partition $\cal P$ of $[q]$ with $j_1$ sets with 1 element, $j_2$ sets with 2 elements, ..., $j_q$ sets with $q$ elements. We notice that $j_1+2j_2+2j_3+...+2j_p<j_1+2j_2+...+qj_q=q$ excepted if $j_3=...=j_q=0$ (and then the two terms are equal). In particular,
$N^{-\frac{q}{2}}\prod\limits_{V\in\cal P }(-1)^{|V|-1}(|V|-1)!m_N(f_V)$ converges in probability to $0$ if $j_3+...+j_q>0$.
By Slutsky's theorem
it follows that, if $N^{-\frac{q}{2}} m_N^{\odot q}(F)=N^{\frac{q}{2}} (\eta_t^N)^{\odot q}(F)$ converges in law, it converges to the same limit as $\sum\limits_{\overline{\cal P}}\prod\limits_{V\in \overline{\cal P}}(-1)^{|V|-1}(|V|-1)!m_N(f_V)$, where the sum is restricted now to partitions $\overline{\cal P}$ of $[q]$ with $j_3=...=j_q=0$.
The next theorem follows.

\begin{theorem}
With the above asumptions, we have the convergence in law:
$$N^{\frac{q}{2}} (\eta_t^N)^{\odot q}(F)\rightarrow \sum\limits_{k=0}^{\left\lceil \frac{q}{2}\right\rceil}(-1)^k
\sum\limits_{1\leq i_1<...<i_{q-2k}\leq q}I(f_{i_1})...I(f_{i_{q-2k}})\sum\limits_{J}\eta_t(f_{J_1})....\eta_t(f_{J_k}).$$
Here, $J$ runs over the partitions of $[q]-\{i_1,...,i_{q-2k}\}$ in ordered pairs $J_1=\{j_1,j_1'\},...,J_k=\{j_k,j_k'\}$ (where $j_i<j_i'$ and $j_1<...<j_k$).
Besides, $\left\lceil x\right\rceil$ stands for the integer part of a real number $x$ (the highest integer less or equal to $x$).
The $I(f_i), i=1,...,q$ are Wiener integrals of the functions $f_i$. They form a Gaussian family with $\E[I(f_i)]=0$ and $\E[I(f_i)I(f_j)]=\eta_t(f_if_j)+\E(W_t(f_i \otimes  f_j))$.
\end{theorem}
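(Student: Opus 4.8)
The plan is to follow the reduction sketched immediately above the statement and to upgrade the conditional phrase ``if $N^{q/2}(\eta_t^N)^{\odot q}(F)$ converges in law'' into an actual convergence, the crucial input being the multivariate central limit theorem for the particle system (Theorem~\ref{Theo:CLT}). First I would apply the Rubin--Vitale formula to $m_N:=N\cdot\eta_t^N$, so that $N^{q/2}(\eta_t^N)^{\odot q}(F)=N^{-q/2}m_N^{\odot q}(F)$ is written as a sum over the set partitions $\mathcal P$ of $[q]$ of the terms $c_{\mathcal P}\,N^{-q/2}\prod_{V\in\mathcal P}m_N(f_V)$, with $c_{\mathcal P}=\prod_{V\in\mathcal P}(-1)^{|V|-1}(|V|-1)!$ and $f_V=\prod_{v\in V}f_v$.

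The second step is the power count. For a block $V$ with $|V|=1$, $N^{-1/2}m_N(f_V)=\sqrt N\,\eta_t^N(f_v)$ is bounded in probability by Theorem~\ref{Theo:CLT} (recall the $f_v$ are centered), while for $|V|\geq 2$, $N^{-1}m_N(f_V)=\eta_t^N(f_V)$ converges in probability to $\eta_t(f_V)$ by the law of large numbers for the particle system recalled in Section~\ref{Sec:1}. Hence, if $\mathcal P$ has $j_\ell$ blocks of size $\ell$ (so $\sum_\ell\ell j_\ell=q$), its term is $N^{-q/2+j_1/2+\sum_{\ell\geq 2}j_\ell}$ times a factor admitting a limit, and the exponent equals $-\sum_{\ell\geq 3}\frac{\ell-2}{2}j_\ell\leq 0$, with equality precisely when $j_3=\dots=j_q=0$. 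By Slutsky's theorem, every contribution coming from a partition with a block of size $\geq 3$ tends to $0$ in probability, so $N^{q/2}(\eta_t^N)^{\odot q}(F)$ has the same asymptotic behaviour as the sum restricted to the partitions $\overline{\mathcal P}$ of $[q]$ into singletons and pairs only.

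The third step identifies the limit of this reduced sum. A partition $\overline{\mathcal P}$ of that type amounts to a choice of $q-2k$ singletons $\{i_1\},\dots,\{i_{q-2k}\}$ together with a pairing $J_1,\dots,J_k$ of the remaining indices, and has coefficient $(-1)^k$; using $N^{-q/2}=N^{-(q-2k)/2}N^{-k}$, its normalised contribution reads $(-1)^k\prod_{l=1}^{q-2k}(\sqrt N\,\eta_t^N(f_{i_l}))\prod_{m=1}^k\eta_t^N(f_{J_m})$. I would then invoke Theorem~\ref{Theo:CLT} in its vector-valued form: $(\sqrt N\,\eta_t^N(f_1),\dots,\sqrt N\,\eta_t^N(f_q))$ converges in law to the centered Gaussian family $(I(f_1),\dots,I(f_q))$ with $\E[I(f_i)I(f_j)]=\eta_t(f_if_j)+\E(W_t(f_i\otimes f_j))$, while each $\eta_t^N(f_{J_m})=\eta_t^N(f_af_b)$ converges in probability to the constant $\eta_t(f_af_b)$. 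The continuous mapping theorem together with Slutsky, applied to the finitely many terms of the reduced sum, then delivers exactly the stated formula $\sum_k(-1)^k\sum_{i_1<\dots<i_{q-2k}}I(f_{i_1})\cdots I(f_{i_{q-2k}})\sum_J\eta_t(f_{J_1})\cdots\eta_t(f_{J_k})$, and feeding this back into the second step completes the argument.

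The main obstacle is not a single computation but the requirement that all these convergences hold \emph{jointly}: the reduction to partitions with blocks of size $\leq 2$ must be compatible with the passage to the limit, and the limiting variable genuinely mixes Gaussian factors (coming from the singletons) with deterministic factors (coming from the pairs). This is precisely why a multivariate CLT for the interacting particle system, Theorem~\ref{Theo:CLT}, is indispensable here, and why the convergence of the products $\eta_t^N(f_af_b)$ to constants must be handled via Slutsky rather than absorbed into the Gaussian limit; beyond that, the proof reduces to a careful bookkeeping of the Rubin--Vitale combinatorics.
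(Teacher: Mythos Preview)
Your proposal is correct and follows essentially the same route as the paper: Rubin--Vitale expansion, power count eliminating partitions with blocks of size $\geq 3$ via Slutsky, and identification of the limit of the surviving terms through Theorem~\ref{Theo:CLT}. You are in fact slightly more explicit than the paper, which leaves the joint convergence of the reduced sum (your third step, invoking the vector-valued CLT together with continuous mapping and Slutsky) implicit in the phrase ``The next theorem follows.''
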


\begin{corollary}
In the particular case $F=f^{\otimes q}$ with $\eta_t(f^2)=1$, we get the convergence in law:
$$N^{\frac{q}{2}} (\eta_t^N)^{\odot q}(F)\rightarrow H_k(I(f))$$
where $H_k$ stands for the $k$-th Hermite polynomial.
\end{corollary}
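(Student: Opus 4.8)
The plan is to obtain the Corollary as an immediate specialization of the preceding Theorem, so that the only thing left to do is a short combinatorial identification. First I would substitute $f_1=\dots=f_q=f$ in the limiting formula
$$N^{q/2}(\eta_t^N)^{\odot q}(F)\longrightarrow \sum_{k=0}^{\lceil q/2\rceil}(-1)^k\sum_{1\le i_1<\dots<i_{q-2k}\le q}I(f_{i_1})\cdots I(f_{i_{q-2k}})\sum_{J}\eta_t(f_{J_1})\cdots\eta_t(f_{J_k})\ .$$
Each factor $I(f_{i_r})$ then equals $I(f)$, so the product over the $q-2k$ singletons is $I(f)^{q-2k}$, while each factor $\eta_t(f_{J_s})$ with $J_s=\{j_s,j_s'\}$ equals $\eta_t(f^2)=1$. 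Consequently the inner double sum reduces to the product of two cardinalities: the number $\binom{q}{q-2k}=\binom{q}{2k}$ of admissible index tuples $i_1<\dots<i_{q-2k}$, times the number $\#\mathcal{I}_{2k}=\frac{(2k)!}{2^k\,k!}$ of pair-partitions $J$ of the $2k$ leftover indices.

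Next I would collect the coefficients. The term of degree $q-2k$ in $I(f)$ comes with the scalar
$$(-1)^k\binom{q}{2k}\,\frac{(2k)!}{2^k\,k!}=\frac{(-1)^k\,q!}{(q-2k)!\,2^k\,k!}\ ,$$
and the values $k$ for which $q-2k<0$ (which can happen only when $q$ is odd, at $k=\lceil q/2\rceil$) contribute empty sums. Hence the limiting random variable equals $\sum_{k=0}^{\lfloor q/2\rfloor}\frac{(-1)^k\,q!}{(q-2k)!\,2^k\,k!}\,I(f)^{q-2k}$, which is precisely the standard closed form of the $q$-th probabilists' Hermite polynomial $H_q$. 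Since, by the Theorem, $I(f)$ is a centered Gaussian with variance $\eta_t(f^2)+\E(W_t(f\otimes f))$ and here $\eta_t(f^2)=1$, this gives the asserted convergence in law $N^{q/2}(\eta_t^N)^{\odot q}(f^{\otimes q})\to H_q(I(f))$.

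I do not expect any genuine obstacle: the argument is one substitution followed by recognizing a binomial identity as the Hermite expansion. The only mild points of care are to track the summation range correctly when $q$ is odd (so that the would-be top term $k=\lceil q/2\rceil$ drops out) and to fix the Hermite normalization ($H_q(x)=\sum_{k}\frac{(-1)^k q!}{(q-2k)!\,2^k\,k!}x^{q-2k}$ is the one making the identification exact, and being a polynomial identity it then applies to $x=I(f)$ irrespective of the variance of $I(f)$).
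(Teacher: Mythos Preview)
Your proposal is correct and follows essentially the same approach as the paper: specialize the preceding Theorem to $f_1=\dots=f_q=f$, use $\eta_t(f^2)=1$ to reduce the pair-factors to $1$, count the partitions of $[q]$ into $q-2k$ singletons and $k$ pairs as $\frac{q!}{2^k(q-2k)!k!}$, and recognize the resulting sum as the probabilists' Hermite polynomial. Your handling of the odd-$q$ top term and the remark on the Hermite normalization are slightly more explicit than the paper's, but the argument is the same.
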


Indeed, in that case, since the number of partitions of $[q]$ with $q-2k$ singletons and $k$ pairs is
$\frac{q!}{2^k(q-2k)!k!}$, the Theorem simplifies to a convergence in law of $N^{-\frac{q}{2}} (\eta_t^N)^{\odot q}(F)$ to
$$\sum\limits_{k=0}^{\left\lceil \frac{q}{2}\right\rceil}(-1)^k\frac{q!}{2^k(q-2k)!k!}
I(f)^{q-2k},$$
where one recognizes the expansion of the $q$th (``probabilistic'') Hermite polynomial:
$$H_q(x)=\sum\limits_{k=0}^{\left\lceil \frac{q}{2}\right\rceil}(-1)^k\frac{q!}{2^k(q-2k)!k!}
x^{q-2k}.$$

\providecommand{\noopsort}[1]{}
\providecommand{\bysame}{\leavevmode\hbox to3em{\hrulefill}\thinspace}
\providecommand{\MR}{\relax\ifhmode\unskip\space\fi MR }
\providecommand{\MRhref}[2]{%
  \href{http://www.ams.org/mathscinet-getitem?mr=#1}{#2}
}
\providecommand{\href}[2]{#2}

 \end{document}